\definecolor{darkblue}{rgb}{0,0,0.5}
\def\prettyref{\cref}
\theoremstyle{plain}
\newtheorem{thm}{\protect\theoremname}
\theoremstyle{plain}
\newtheorem{claim}[thm]{\protect\claimname}
\theoremstyle{plain}
\newtheorem*{remark*}{\protect\remarkname}
\theoremstyle{plain}
\newtheorem{lem}[thm]{\protect\lemmaname}
\def\numberedstatement{\begin{equation}\begin{minipage}[c][1\totalheight][t]{0.9\linewidth}}\def\endnumberedstatement{\end{minipage}\end{equation}}
\newenvironment{boldproof}[1][\proofname] {\par\pushQED{\qed}\normalfont\topsep6\p@\@plus6\p@\relax\trivlist\item[\hskip\labelsep\bfseries#1\@addpunct{.}]\ignorespaces}{\popQED\endtrivlist\@endpefalse}
\newenvironment{subproof}[1][\proofname]{%
\begin{proof}[#1]%
}{%
\end{proof}%
}
\providecommand*{\partitle}[1]{\textbf{#1}}
\newcommand\present[1]{%
\write\@auxout{\noexpand\expandafter\noexpand\gdef\noexpand\csname present@#1\noexpand\endcsname{}}%
\expandafter\gdef\csname stillpresent@#1\endcsname{}%
}
\newlength\sidebysidesep
\newcommand\sidebysideedgesepfactor{0}
\newcommand\@eoaddto[2]{%
\expandafter\expandafter\expandafter\long\expandafter\expandafter\expandafter\def\expandafter\expandafter\expandafter#1\expandafter\expandafter\expandafter{\expandafter#1\expandafter{#2}}}
\newcommandx\@eosubstdef[9][1={},2={},3={},4={},5={},6={},7={}]{%
\def\@args{}%
\@eoaddto\@args{#1}%
\@eoaddto\@args{#2}%
\@eoaddto\@args{#3}%
\@eoaddto\@args{#4}%
\@eoaddto\@args{#5}%
\@eoaddto\@args{#6}%
\@eoaddto\@args{#7}%
\long\def\def@command##1##2##3##4##5##6##7{\long\def#8{#9}}%
\newcommandx\@esubstdef[9][1={},2={},3={},4={},5={},6={},7={}]{%
\edef\@args{{#1}{#2}{#3}{#4}{#5}{#6}{#7}}%
\long\def\def@command##1##2##3##4##5##6##7{\long\def#8{#9}}%
\newcommand\sidebysidebox[1]{\GenericError{}{\noexpand\sidebysidebox can only be used inside the \noexpand\sidebyside command}{}{}}
\newcounter{sidebysidebox@count}
\newcommandx\sidebyside[3][1=\sidebysideedgesepfactor,2=\sidebysidesep,usedefault]{%
\setcounter{sidebysidebox@count}{0}%
\def\sidebyside@first@hook{}%
\long\def\sidebysidebox##1{\@eosubstdef[\sidebyside@first@hook]\sidebyside@first@hook{####1\sidebysidebox@second{##1}}\stepcounter{sidebysidebox@count}}%
#3\relax%
\def\sidebyside@second@hook{\noindent\hspace*{0pt plus 1fil}\hspace*{0pt plus #1fill}}%
\@esubstdef[\thesidebysidebox@count]{\sidebysidebox@second####1}{\@eosubstdef[\sidebyside@second@hook]\sidebyside@second@hook{########1\sidebyside@box{##1}{####1}}}%
\sidebyside@first@hook%
\long\def\sidebyside@box##1##2{\begin{minipage}[t]{(\linewidth - (#2)*((##1)-1)-(#2)*\real{#1}*2) / (##1)}##2\relax\end{minipage}\hspace*{\fill}}%
\par\sidebyside@second@hook\hspace*{0pt plus -1fill}\hspace*{0pt plus #1fill}\hspace*{0pt plus 1fil}\par}
\date{}
\providecommand\phantomsection{}
\newif\if@presentchanged
\def\presentchanged@warning{\if@presentchanged\else\typeout{LaTeX Warning: Label(s) may have changed. Rerun to get cross-references right.}\@presentchangedtrue\fi}
\providecommandx\ifpresent[3][3=]{\ifcsname present@#1\endcsname%
\AtEndDocument{\ifcsname stillpresent@#1\endcsname\else\presentchanged@warning\fi}\def\ifpresent@command{#2}%
\else \AtEndDocument{\ifcsname stillpresent@#1\endcsname\presentchanged@warning\fi}\def\ifpresent@command{#3}%
\fi%
\ifpresent@command%
}
\g@addto@macro\bfseries{\boldmath}
\def\padded#1{\leavevmode \setbox \@tempboxa \hbox {\color@begingroup {\ensuremath{#1}}\color@endgroup }\@tempdima \fboxsep \advance \@tempdima \dp \@tempboxa  \hbox {\lower \@tempdima \hbox {\vbox {\vskip \fboxsep \box \@tempboxa \vskip \fboxsep }\relax}}}
\providecommand{\claimname}{Claim}
\providecommand{\lemmaname}{Lemma}
\providecommand{\remarkname}{Remark}
\providecommand{\theoremname}{Theorem}
\begin{document}
\global\long\def\naturals{\mathbf{N}}%
\global\long\def\integers{\mathbf{Z}}%
\global\long\def\reals{\mathbf{R}}%
\global\long\def\B{{\color{blue}\mathcal{B}}}%
\global\long\def\R{{\color{red}\mathcal{R}}}%
\global\long\def\Q{\mathcal{Q}}%
\global\long\def\tbd{{\color{magenta}??}}%
\global\long\def\tif{\text{if }}%
\global\long\def\tand{\text{ and }}%
\global\long\def\tandfinal{\text{ and}}%
\global\long\def\totherwise{\text{otherwise}}%
\global\long\def\A{\mathcal{A}}%
\global\long\def\C{\mathcal{C}}%
\global\long\def\F{\mathcal{F}}%
\global\long\def\fail{\textnormal{\ensuremath{\frownie}}}%
\global\long\def\Im{\operatorname{Im}}%
\global\long\def\Dom{\operatorname{Dom}}%
\global\long\def\mathcomment#1{}%
\global\long\def\Padded#1{\padded{#1}}%

\makeatletter
\global\long\def\labelifpresent#1{\ifpresent{#1}{}[\nonumber\@gobbletwo]}%
\makeatother
\title{Ramsey numbers of Boolean lattices}
\author{D\'aniel Gr\'osz\thanks{Department of Mathematics, University of Pisa, Pisa. e-mail: \protect\href{mailto:groszdanielpub@gmail.com}{groszdanielpub@gmail.com}}
\and  Abhishek Methuku\thanks{School of Mathematics, University of Birmingham, Birmingham. e-mail: \protect\href{mailto:abhishekmethuku@gmail.com}{abhishekmethuku@gmail.com}}
\and  Casey Tompkins\thanks{Discrete Mathematics Group, Institute for Basic Science (IBS), Daejeon. e-mail: \protect\href{mailto:ctompkins496@gmail.com}{ctompkins496@gmail.com}}}
\maketitle
\begin{abstract}
\noindent \setlength\parskip\medskipamount The \emph{poset Ramsey number} $R(\Q_{m},\Q_{n})$ is the smallest integer $N$
such that any blue-red coloring of the elements of the Boolean lattice $\Q_{N}$ has a blue induced copy of~$\Q_{m}$ or
a red induced copy of~$\Q_{n}$. The \emph{weak poset Ramsey number} $R_{w}(\Q_{m},\Q_{n})$ is defined analogously, with
weak copies instead of induced copies. It is easy to see that $R(\Q_{m},\Q_{n})\ge R_{w}(\Q_{m},\Q_{n})$.

\noindent Axenovich and Walzer~\cite{AxenovichWalzer} showed that $n+2\le R(\Q_{2},\Q_{n})\le2n+2$. Recently, Lu and Thompson~\cite{LuThompson}
improved the upper bound to $\frac{5}{3}n+2$. In this paper, we solve this problem asymptotically by showing that $R(\Q_{2},\Q_{n})=n+O(n/\log n)$.

\noindent In the diagonal case, Cox and Stolee~\cite{CoxStolee} proved $R_{w}(\Q_{n},\Q_{n})\ge2n+1$ using a probabilistic
construction. In the induced case, Bohman and Peng~\cite{BohmanPeng} showed $R(\Q_{n},\Q_{n})\ge2n+1$ using an explicit
construction. Improving these results, we show that $R_{w}(\Q_{m},\Q_{n})\ge n+m+1$ for all $m\ge2$ and large~$n$ by
giving an explicit construction; in particular, we prove that $R_{w}(\Q_{2},\Q_{n})=n+3$.
\end{abstract}

\section{Introduction}

\partitle{Background and definitions.} The classical Ramsey theorem asserts that for any $m$ and $n$, there is an integer~$N$
such that every blue-red edge coloring of the complete graph on $N$ vertices contains a blue clique on $m$ vertices or
a red clique on $n$ vertices. Determining the smallest such integer $N$, known as the Ramsey number is a central problem
in combinatorics. More generally, for any two graphs $G$ and $H$, the Ramsey number is the smallest integer $N$ such that
every blue-red edge coloring of the complete graph on $N$ vertices contains a red copy of $G$ or a blue copy of $H$. Several
natural variations of these problems such as multicolor Ramsey numbers, and hypergraph Ramsey numbers are major subjects
of ongoing research. For further examples, we refer the reader to the surveys ~\cite{ConlonFoxSudakov,MubayiSuk}.

In this paper, we will study poset Ramsey numbers. A \emph{partially ordered set} (or a \emph{poset} for short) is a set
with an accompanying relation $\le$ which is transitive, reflexive, and antisymmetric. A \emph{Boolean lattice} of dimension~$n$,
denoted by $\Q_{n}$, is the power set of $[n]\coloneqq\{1,2,\ldots,n\}$ equipped with the inclusion relation. If $(P,\le)$
and $(Q,\le')$ are posets, then an injection $f:P\to Q$ is \emph{order-preserving} if $f(x)\le'f(y)$ whenever $x\le y$;
we say that $f(P)$ is a \emph{weak copy} of~$P$ in~$Q$ and that $P$ is a \emph{weak subposet} of~$Q$. An injection
$f:P\to Q$ is an \emph{order-embedding} if $f(x)\le'f(y)$ if and only if $x\le y$; we say that $f(P)$ is an \emph{induced
copy} of $P$ in $Q$ and that $P$ is an \emph{induced subposet} of~$Q$.

For posets $P_{1}$ and $P_{2}$, the \emph{(induced) poset Ramsey number} $R(P_{1},P_{2})$ is defined to be the smallest
integer $N$ such that every blue-red coloring of the elements of the Boolean lattice $\Q_{N}$ contains an induced copy
of $P_{1}$ whose elements are blue or an induced copy of~$P_{2}$ whose elements are red. Similarly, the \emph{weak poset
Ramsey number} $R_{w}(P_{1},P_{2})$ is defined to be the smallest integer $N$ such that every blue-red coloring of the
elements of the Boolean lattice $\Q_{N}$ contains a weak copy of~$P_{1}$ whose elements are blue or a weak copy of~$P_{2}$
whose elements are red. (For convenience, we will call a copy of poset $P$ all of whose elements are blue is called a blue
copy of $P$, and a copy of poset~$P$ all of whose elements are red is called a red copy of~$P$.) It is easy to see that
$R(P_{1},P_{2})\ge R_{w}(P_{1},P_{2})$. The focus of this paper is the natural problem when $P_{1}$ and $P_{2}$ are Boolean
lattices $\Q_{m}$ and $\Q_{n}$ for $m,n\in\naturals$. Recently, variants of this problem, such as rainbow poset Ramsey
numbers have been studied in~\cite{Chang.etal,Chenetal,CoxStolee}.

\partitle{Induced poset Ramsey numbers.} For the diagonal poset Ramsey number $R(\Q_{n},\Q_{n}),$ Axenovich and Walzer~\cite{AxenovichWalzer}
showed that $2n\le R(\Q_{n},\Q_{n})\le n^{2}+2n$. Walzer~\cite{WalzerThesis} improved the upper bound to $R(\Q_{n},\Q_{n})\le n^{2}+1$.
Recently, Lu and Thompson~\cite{LuThompson} further improved it to $R(\Q_{n},\Q_{n})\le n^{2}-n+2$. On the other hand,
Cox and Stolee~\cite{CoxStolee} showed that for $n\ge13$, $R_{w}(\Q_{n},\Q_{n})\ge2n+1$, which implies that $R(\Q_{n},\Q_{n})\ge2n+1$.

More generally, Axenovich and Walzer~\cite{AxenovichWalzer} showed that $n+m\le R(\Q_{m},\Q_{n})\le mn+n+m$ for any integers
$n,m\ge1$. Lu and Thompson~\cite{LuThompson} improved this bound by showing that $R(\Q_{m},\Q_{n})\le(m-2+\frac{9m\text{\textminus}9}{(2m\text{\textminus}3)(m+1)})n+m+3$
for all $n\ge m\ge4$. See \cite{AxenovichWalzer,CoxStolee,LuThompson,WalzerThesis} for several other interesting results.

For the off-diagonal poset Ramsey number $R(\Q_{2},\Q_{n})$, Axenovich and Walzer~\cite{AxenovichWalzer} showed that $n+2\le R(\Q_{2},\Q_{n})\le2n+2$.
Recently, Lu and Thompson~\cite{LuThompson} improved the upper bound by proving that $R(\Q_{2},\Q_{n})\le\frac{5}{3}n+2$.
In this paper, we determine $R(\Q_{2},\Q_{n})$ asymptotically by proving the following theorem.
\begin{thm}
\label{thm:Upperbound}For every $c>2$, there exists an integer $n_{0}$ such that for all $n\ge n_{0}$, we have 
\[
R(\Q_{2},\Q_{n})\le n+c\frac{n}{\log_{2}n}.
\]
\end{thm}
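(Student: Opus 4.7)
The plan is to argue by contrapositive: assume a blue--red colouring of $\Q_{N}$ with $N=n+\lceil cn/\log_{2}n\rceil$ admits no blue induced $\Q_{2}$, and construct a red induced $\Q_{n}$. Any induced copy of $\Q_{n}$ in $\Q_{N}$ is an interval of the form $[B,\,B\cup I]=\{B\cup S:S\subseteq I\}$ for disjoint $B,I\subseteq[N]$ with $|I|=n$, so the task reduces to finding a pair $(B,I)$ for which no element of the blue family~$\B$ lies in $[B,B\cup I]$.

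For a fixed base~$B$ and a uniformly random atom set $I\subseteq[N]\setminus B$ of size~$n$, the probability that some blue set lies in $[B,B\cup I]$ is at most
\[
\sum_{X\in\B,\ B\subseteq X}\frac{\binom{n}{|X|-|B|}}{\binom{N-|B|}{|X|-|B|}}.
\]
A direct bound of this sum by~$1$ would require $|\B|<2^{k}$, which is far too strong since induced $\Q_{2}$-free families can have size as large as $\Theta\bigl(\binom{N}{\lfloor N/2\rfloor}\bigr)$. I plan instead to average additionally over~$B$, drawn uniformly from subsets of $[N]$ of a suitably chosen size~$b$; a short manipulation using the identity $\binom{|X|}{b}\binom{N-b}{|X|-b}=\binom{N}{|X|}\binom{|X|}{b}$ reduces the doubly-averaged bad fraction to the weighted LYM-type sum
\[
\sum_{l}\frac{|\B_{l}|}{\binom{N}{l}}\cdot\binom{n}{l-b}.
\]
The strategy is to exhibit some $b\in[0,k]$ for which this quantity is strictly less than~$1$, guaranteeing a good pair $(B,I)$.

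The key structural input will be an LYM-type inequality $\sum_{l}|\B_{l}|/\binom{N}{l}=O(1)$ for induced $\Q_{2}$-free families, together with sharp asymptotics of the binomials $\binom{n}{l-b}/\binom{N}{l}$ in the regime $k=\Theta(n/\log_{2}n)$ with $b$ close to~$k$. Since an induced $\Q_{2}$-free family is necessarily concentrated near the middle layer of~$\Q_{N}$, and since the weights $\binom{n}{l-b}$ evaluated at such~$l$ are sharply smaller than $\binom{n}{\lfloor n/2\rfloor}$ when $b\approx k$ (by Stirling, $\binom{n}{n/2+k/2}/\binom{n}{n/2}\sim e^{-\Omega(k^{2}/n)}$), a careful balancing gives the sought inequality. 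The $\log_{2}n$ saving over prior bounds is expected to come from this exponential decay of the binomial weights away from the centre, translated via $k\log(n/k)\sim cn\log\log n/\log n$.

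The main obstacle, I expect, is that the LYM-type inequality on its own is not sharp enough to push the sum below~$1$ with the correct constant $c>2$. Closing the remaining gap will require finer consequences of induced $\Q_{2}$-freeness — for instance, that two adjacent levels $\B_{l}$ and $\B_{l+1}$ cannot simultaneously be close to maximum density, or a Kruskal--Katona/shadow-type per-level bound — together with a careful optimisation of~$b$ over its admissible range. Implementing this sharp form of the structural input and matching it to the asymptotics of the binomial kernel is the principal technical challenge.
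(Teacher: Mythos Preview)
Your proposal contains a fatal error at the outset: the claim that ``any induced copy of $\Q_{n}$ in $\Q_{N}$ is an interval of the form $[B,B\cup I]$'' is false. Already in $\Q_{3}$ the family $\{\emptyset,\{1\},\{2,3\},\{1,2,3\}\}$ is an induced copy of $\Q_{2}$ that is not an interval. Restricting to interval embeddings is not a harmless simplification; it makes the problem genuinely impossible in the regime you need. Take $\B=\binom{[N]}{\lfloor N/2\rfloor}$, a single middle layer. This is an antichain, hence certainly induced-$\Q_{2}$-free. But with $N=n+k$ and $k=o(n)$, every interval $[B,B\cup I]$ with $|I|=n$ has bottom level $|B|\le k<N/2$ and top level $|B|+n\ge n>N/2$, so it crosses the middle layer and meets $\B$. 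Thus no interval of height $n$ lies in $\R$, even though $\R$ does contain an induced $\Q_{n}$ by the theorem. No amount of sharpening the LYM-type input or optimising over $b$ can rescue this; your weighted sum $\sum_{l}\frac{|\B_{l}|}{\binom{N}{l}}\binom{n}{l-b}$ is at least $\binom{n}{\lfloor N/2\rfloor-b}$ in this example, which is exponentially large in $n$ for every admissible $b\in[0,k]$.

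The paper's proof exploits precisely the flexibility you have discarded. It builds an embedding $\varphi:\Q_{n}\to\R$ of the form $\varphi(A)=A\cup S_{A}$ with $S_{A}\subseteq[n+k]\setminus[n]$ allowed to \emph{grow monotonically with $A$}: one fixes a permutation $\pi$ of the $k$ extra coordinates and, whenever the current candidate image lands in $\B$, appends the next coordinate in $\pi$-order. If this process fails it produces a blue chain of length $k+1$ from which $\pi$ can be recovered. Running over all $k!$ permutations yields $k!$ such chains, and induced-$\Q_{2}$-freeness of $\B$ forces the map $\pi\mapsto(\text{bottom},\text{top})$ of the chain to be injective. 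Hence $k!\le 2^{2(n+k)}$, which fails for $k\sim c\,n/\log_{2}n$ with $c>2$ and $n$ large. The essential idea you are missing is this adaptive, non-interval embedding together with the counting of permutations via blue chains.
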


Combining \prettyref{thm:Upperbound} with the lower bound $R(\Q_{2},\Q_{n})\ge n+2$, we obtain that $R(\Q_{2},\Q_{n})$
is asymptotically equal to~$n$. We prove \prettyref{thm:Upperbound} in \prettyref{sec:upper}. In fact, it follows from
our proof of \prettyref{thm:Upperbound} that for all $n\ge2$, we have $R(\Q_{2},\Q_{n})\le n+6.14\frac{n}{\log_{2}n}$.

\partitle{Weak poset Ramsey numbers.} A chain of length~$k$ is a poset of~$k$ distinct, pairwise comparable elements
and is denoted by~$C_{k}$. Cox and Stolee~\cite{CoxStolee} showed that $R_{w}(C_{k},\Q_{n})=n+k-1$; since $\Q_{m}$~is
a weak subposet of~$C_{2^{m}}$, this implies that $R_{w}(\Q_{m},\Q_{n})\le n+2^{m}-1$. The lower bound $R_{w}(\Q_{m},\Q_{n})\ge m+n$
is obtained by a simple ``layered'' coloring of~$\Q_{m+n-1}$ considered by Axenovich and Walzer~\cite{AxenovichWalzer},
which is described as follows. The collection of all subsets of~$[N]$ of a given size~$k$ is called a \emph{layer}. A
coloring of~$\Q_{N}$ is \emph{layered} if for every layer, all sets on that layer have the same color. A layered coloring
of~$\Q_{m+n-1}$ with $m$~blue layers and $n$~red layers does not contain a (weak) blue copy of~$\Q_{m}$ or a (weak)
red copy of~$\Q_{n}$. Therefore, $R_{w}(\Q_{m},\Q_{n})\ge m+n$ (which implies $R(\Q_{m},\Q_{n})\ge m+n$). Despite the
work of several researchers, so far this lower bound on $R_{w}(\Q_{m},\Q_{n})$ has not been improved except in the diagonal
case: Cox and Stolee~\cite{CoxStolee} showed that $R_{w}(\Q_{n},\Q_{n})\ge2n+1$ for $n\ge13$ using a probabilistic construction.
Recently, in the induced case, Bohman and Peng \cite{BohmanPeng} gave an explicit construction showing the bound $R(\Q_{n},\Q_{n})\ge2n+1$.
Note that these constructions showing $R(\Q_{n},\Q_{n})\ge2n+1$ cannot be layered.

We give an explicit construction which yields a lower bound on $R_{w}(\Q_{m},\Q_{n})$ for all~$m$ and $n\ge68$, thereby
generalizing the results of Bohman and Peng to the weak poset case, and additionally extending their results and those of
Cox and Stolee to the off-diagonal case.
\begin{thm}
\label{thm:Lowerbound}For any $m\ge2$ and $n\ge68$, we have \textup{
\[
R_{w}(\Q_{m},\Q_{n})\ge m+n+1.
\]
}
\end{thm}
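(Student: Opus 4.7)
The plan is to prove $R_{w}(\Q_{m},\Q_{n})\ge m+n+1$ by exhibiting an explicit blue-red coloring of $\Q_{m+n}$ with no blue weak $\Q_{m}$ and no red weak $\Q_{n}$. The standard layered coloring of $\Q_{m+n-1}$ is tight, so we must modify it: naively extending layers $0,\ldots,m-1$ blue and layers $m,\ldots,m+n$ red on $\Q_{m+n}$ fails because for any size-$m$ set $A$, the interval $[A,[m+n]]$ is an induced red $\Q_{n}$, so every size-$m$ set must be blocked somehow.

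The construction I envisage keeps most of the layered structure but augments blue with a carefully chosen ``blocker'' family $\mathcal{X}$ at layer $m$ (and possibly at layer $m+1$), and recolors a companion family $\mathcal{Y}$ of low-rank subsets to red (at minimum, $\emptyset\in\mathcal{Y}$, and one subset below each blocker above layer $m$). To verify that the blue region contains no weak $\Q_{m}$, one notes that such a copy needs a chain of length $m+1$, so its top must lie at rank $\ge m$, i.e., in $\mathcal{X}$; the interval below any such top $X$ has exactly $2^{|X|}$ elements and must contain the entire $\Q_{m}$-embedding, so the removal of a companion $Y\subsetneq X$ (or just $\emptyset$ when $|X|=m$, which already blocks every weak $\Q_{m}$ whose top is at rank $m$, since its bottom would then have to be $\emptyset$) drops the blue part of this interval below the $2^{m}$ elements required for a weak $\Q_{m}$ to fit.

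The main obstacle is verifying the absence of a red weak $\Q_{n}$. For any purported embedding $f\colon\Q_{n}\to\Q_{m+n}$ with image in red, a rank analysis of the chain $f(\emptyset)\subsetneq f(\{1\})\subsetneq\cdots\subsetneq f([n])$ is performed. Red occupies rank $0$ (only $\emptyset$) and ranks $m,\ldots,m+n$ (minus the blockers $\mathcal{X}$), so the $n+1$ distinct ranks of this chain must pack tightly into the allowable rank set; a pigeonhole argument pins the atom images $f(\{j\})$ to ranks $m$ or $m+1$, and in the case $f(\emptyset)\ne\emptyset$ also pins $f(\emptyset)$ to rank $m$. In each sub-case the incomparability of the singletons in $\Q_{n}$ forces the atoms to satisfy a combinatorial constraint (for instance, in the ``all atoms at rank $m+1$'' case, pairwise rank-of-join computations force the atoms to form a sunflower-like family around a common $m$-element core). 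The family $\mathcal{X}$ is designed to intersect every such atom configuration, which contradicts the red-ness of all atoms. Balancing the size of $\mathcal{X}$ (rich enough to cover all configurations at each relevant rank) against the disruption caused by the recolorings in $\mathcal{Y}$ (which must not themselves create new red $\Q_{n}$ copies at lower layers) is the source of the numerical threshold $n\ge 68$.
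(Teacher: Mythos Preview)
Your plan diverges sharply from the paper's construction, and as it stands it has a genuine gap.

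\textbf{Where your approach breaks.} First, the counting argument for ``no blue weak $\Q_{m}$'' with top $X$ at rank $m+1$ is simply false for $m\ge3$: the interval $[\emptyset,X]$ has $2^{m+1}$ elements, and removing $\emptyset$ together with one companion $Y$ still leaves at least $2^{m+1}-2>2^{m}$ blue elements, which is not below the threshold you claim. (This step can in fact be rescued by a chain-length argument if $\mathcal{X}$ avoids layer $m$, but you do not say so.) Second --- and this is the real obstruction --- your rank-pinning does \emph{not} force the atom images to live at layers $m$ and $m+1$ in a way that $\mathcal{X}$ can always intersect. Here is a red weak $\Q_{n}$ that completely avoids layer $m+1$: pick any $(m-1)$-set $T\subset[m+n]$, any $c\in[m+n]\setminus T$, and a bijection $j\mapsto a_{j}$ from $[n]$ to $[m+n]\setminus(T\cup\{c\})$; set $f(\emptyset)=\emptyset$, $f(\{j\})=T\cup\{a_{j}\}$, and $f(A)=T\cup\{c\}\cup\{a_{j}:j\in A\}$ for $|A|\ge2$. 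This is an order-preserving injection whose images have sizes $0,m,m+2,m+3,\ldots,m+n$, so no blocker placed at layer $m+1$ can touch it. To kill such embeddings you are forced to put blockers at layer $m$ as well, with a density condition (every $(m-1)$-set must have at least two supersets in $\mathcal{X}\cap\binom{[m+n]}{m}$), and then the ``no blue $\Q_{m}$'' verification becomes nontrivial again. None of this is addressed, and you never actually specify $\mathcal{X}$.

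\textbf{What the paper does instead.} The paper places the blue band in the \emph{middle} of the lattice: $\B=\binom{[n+m]}{k}\cup\binom{[n+m]}{k+3}\cup\cdots\cup\binom{[n+m]}{k+m+1}\cup\C$, where $k$ sits near $n/2$ and $\C\subset\binom{[n+m]}{k+1}$ is an explicit constant-weight code of minimum distance $4$ built from residues mod a prime (Lemma~8). The distance-$4$ property kills any blue $\Q_{m}$ because its $m$ ``atoms'' would have to be codewords at pairwise symmetric difference $2$. The point of the middle placement is that it forces $|\varphi(A)|=|A|$ for all $|A|\le k-1$ and $|\varphi(A)|=|A|+m$ for all $|A|\ge k+2$; in particular singletons go to singletons and co-singletons to co-singletons, yielding two injections $\varphi_{1},\varphi_{2}:[n]\to[n+m]$ that are shown to agree on all but at most $m$ points. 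From this one extracts (Claims~9--10, Lemma~11) a single element $y$ such that $\varphi(A)=\varphi_{1}[A]\cup\{y\}$ for essentially all $A$ of size $k$, and the code $\C$ is engineered precisely so that some such $\varphi_{1}[A]\cup\{y\}$ lies in $\C\subset\B$, a contradiction. None of this machinery is visible in your sketch, and the ``bottom-layer'' placement you propose does not yield the rigidity $|\varphi(A)|=|A|$ on small sets that drives the paper's argument.
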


Note that \prettyref{thm:Lowerbound} shows that $R_{w}(\Q_{2},\Q_{n})=n+3$ since $R_{w}(\Q_{2},\Q_{n})\le n+2^{2}-1=n+3$
by the upper bound mentioned earlier.

We prove \prettyref{thm:Lowerbound} in \prettyref{subsec:ours-general}. The construction and the proof of \prettyref{thm:Lowerbound}
are simpler if we restrict ourselves to the case of $m=2$ and consider induced subposets rather than weak subposets . Therefore,
in order to illustrate the main ideas of our construction, we present a short proof showing the special case $R(\Q_{2},\Q_{n})\ge n+3$
(for $n\ge18$) in \prettyref{subsec:ours-induced}. We also give a probabilistic construction for $m\ge3$ and $n$ sufficiently
large in \prettyref{subsec:cox stolee} by generalizing a construction of Cox and Stolee~\cite{CoxStolee}.

\section{\label{sec:upper}Upper bound: Proof of \texorpdfstring{\prettyref{thm:Upperbound}}{Theorem~\ref{thm:Upperbound}}}

Let $k=\left\lfloor c\frac{n}{\log_{2}n}\right\rfloor $. Assume that $\B,\R\subset\Q_{n+k}$ such that $\B\sqcup\R=\Q_{n+k}$,
and further assume that $\Q_{2}$~is not an induced subposet of $\B$, and $\Q_{n}$~is not an induced subposet of~$\R$.

Before continuing with the proof of \prettyref{thm:Upperbound}, let us provide an outline of the proof.

\emph{Outline of the proof.} We attempt to define an order-embedding $\varphi$ from $\Q_{n}$ into~$\R$ recursively, starting
with~$\emptyset$, in such a way that the image of each set only depends on the images of its proper subsets. For every
$A\subseteq[n]$, $\varphi(A)$ will be a superset of~$A$, possibly containing some additional elements from $[n+k]\setminus[n]$.

If $\emptyset\in\R$, then we set $\varphi(\emptyset)=\emptyset$. More generally, in order for $\varphi$ to be order-preserving,
for any set $A\in\Q_{n}$, $\varphi(A)$ must be a superset of the images of all proper subsets of~$A$; as long as the
minimal set that is a superset of $A$ and also has this property is in~$\R$, we set it as $\varphi(A)$. If instead this
minimal set is in~$\B$, then we proceed to add elements of $[n+k]\setminus[n]$ to it, in an order determined by some arbitrary
permutation~$\pi$ of $[n+k]\setminus[n]$, until we obtain a set that is in $\R$. Throughout this recursive procedure,
in addition to the injection~$\varphi$, we construct a function~$\alpha$ where $\alpha(A)$ records the number of elements
of $[n+k]\setminus[n]$ we need to include in~$\varphi(A)$ as a result of hitting sets in~$\B$ while attempting to embed
$A$ (and its subsets, during previous steps of the recursion); and another function~$f$, where $f(A)$ records an actual
chain of length~$\alpha(A)$, consisting of sets in~$\B$ that we have encountered while trying to embed $A$ and its subsets.

For any fixed permutation~$\pi$ of $[n+k]\setminus[n]$, the above embedding procedure can only fail if, at some point,
as we try to define $\varphi(A)$ for some $A\in\Q_{n}$, we hit a set in~$\B$, but we have already ``used up'' all $k$
elements of $[n+k]\setminus[n]$, so there are no elements left to add. In this event, we obtain a chain of length $k+1$,
contained in~$\B$. As $\Q_{n}$~is not an induced subposet of $\R$, the procedure must fail for all $k!$ permutations~$\pi$
of $[n+k]\setminus[n]$. This way, we can obtain a chain of length $k+1$ inside~$\B$, corresponding to each of these permutations.
We show that these $k!$ chains must all be distinct. We then show that the existence of $k!$ distinct chains of length
$k+1$ inside $\B$ implies that $\Q_{2}$ is an induced subposet of $\B$, a contradiction.

Now we continue with the proof of \prettyref{thm:Upperbound}.

At the core of the proof is \prettyref{lem:functions}. We will use the following notation: for a chain of sets $\mathcal{C}$
in~$\mathcal{\Q}_{n+k}$ of length~$l$, we denote its sets by $\left(q_{0},q_{1},\ldots,q_{l-1}\right)$ where $q_{0}\subseteq q_{1}\subseteq\ldots\subseteq q_{l-1}$.
\begin{claim}
\label{lem:functions}Let $\pi:[n+k]\setminus[n]\rightarrow[n+k]\setminus[n]$ be a permutation. There exist $\varphi:\Q_{n}\rightarrow\R\cup\{\fail\}$
(where $\fail$~is an arbitrary element, distinct from the members of~$\R$, and used solely to indicate failure to produce
an induced map into~$\R$), $\alpha:\Q_{n}\rightarrow\{0,1,\ldots,k,k+1\}$ and $f:\Q_{n}\rightarrow\C^{\le k+1}(\B)$,
where $\C^{\le k+1}(\B)$ is the family of all chains of length at most $k+1$ in~$\B$, with the following properties:\gdef\labelwidthi{\widthof{\textbf{\textup{L0. }}}}
\gdef\labeli{\textbf{\textup{P\arabic{enumi}. }}}
\gdef\propertyref#1{P#1}
\gdef\refi{\propertyref{\arabic{enumi}}}
\begin{enumerate}[label=\labeli, ref=\refi, labelsep=0em, leftmargin=0em, labelwidth=\labelwidthi, itemindent=\labelwidth, align=left]
\item \label{enu:phi containment}If $B,A\in\Q_{n}$ and $\varphi(B),\varphi(A)\in\R$, then $B\subsetneqq A\Longleftrightarrow\varphi(B)\subsetneqq\varphi(A)$.
(This implies that if $\fail\notin\Im\varphi$, then $\Q_{n}$~is an induced subposet of~$\R$.)
\item \label{enu:alpha order}If $B\subseteq A\in\Q_{n}$, then $\alpha(B)\le\alpha(A)$.
\item \label{enu:alpha phi}If $\alpha(A)=k+1$, then $\varphi(A)=\fail$. Otherwise $\varphi(A)\cap[n]=A$, and $\varphi(A)=A\cup\{\pi(n+\nobreak1),\pi(n+2),\ldots,\pi(n+\alpha(A))\}$.
\item \label{enu:f chain}For every $A\in\Q_{n}$, $f(A)=\left(f(A)_{0},f(A)_{1},\ldots,f(A)_{\alpha(A)-1}\right)$ is a chain
in~$\B$ of length $\alpha(A)$ with the property that $f(A)_{i}\setminus[n]=\{\pi(n+1),\pi(n+2),\ldots,\pi(n+i)\}$.
\item \label{enu:f subset}If $A\in\Q_{n}$ such that $1\le\alpha(A)\le k$, then $f(A)_{\alpha(A)-1}\subseteq\varphi(A)$. (In
fact this implies that $f(A)_{\alpha(A)-1}\subsetneqq\varphi(A)$, since the elements of~$f(A)$ are in~$\B$, while $\varphi(A)$
is in~$\R$. We do not use this observation.)
\end{enumerate}
\gdef\lastproperty{\propertyref{\arabic{enumi}}}
\end{claim}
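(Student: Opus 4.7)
The plan is to construct the triple $(\varphi,\alpha,f)$ by recursion on $|A|$, following exactly the informal procedure described in the outline above. Concretely, for each $A\in\Q_n$ I first let $\alpha_{0}=\max\{\alpha(B):B\subsetneq A\}$ (with the convention $\alpha_{0}=0$ if $A=\emptyset$) and fix some $B_{0}\subsetneq A$ attaining this maximum when $A\ne\emptyset$. I then start from the set $A_{\alpha_{0}}=A\cup\{\pi(n+1),\ldots,\pi(n+\alpha_{0})\}$ and examine $A_{\alpha_{0}},A_{\alpha_{0}+1},\ldots,A_{k}$ in turn; I set $\alpha(A)$ to be the smallest index $i\ge\alpha_{0}$ for which $A_{i}\in\R$, and $\varphi(A)=A_{i}$. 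If no such $i\le k$ exists then $\alpha(A)=k+1$ and $\varphi(A)=\fail$. The chain $f(A)$ is produced by concatenating $(f(B_{0})_{0},\ldots,f(B_{0})_{\alpha_{0}-1})$ with the freshly-encountered blue sets $A_{\alpha_{0}},A_{\alpha_{0}+1},\ldots,A_{\alpha(A)-1}$ (or up through $A_{k}$ in the failure case), so that its length automatically equals $\alpha(A)$. When $\alpha_{0}=k+1$, I simply propagate the failure and set $f(A)=f(B_{0})$.

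Once the definitions are in place, properties \ref{enu:alpha order} and \ref{enu:alpha phi} are immediate from the choice of $\alpha_{0}$ and the formula for $\varphi(A)$. The indexing condition in \ref{enu:f chain} follows from the definition of the $A_{i}$'s together with the inductive hypothesis on $f(B_{0})$, and \ref{enu:phi containment} reduces to the observation that $\varphi(A)\cap[n]=A$: consequently $\varphi(B)\subsetneq\varphi(A)$ is controlled entirely by the $[n]$-part and so is equivalent to $B\subsetneq A$ whenever both images lie in~$\R$. Property \ref{enu:f subset} is checked case by case on the last entry of $f(A)$: either it equals $A_{\alpha(A)-1}\subsetneq A_{\alpha(A)}=\varphi(A)$ (when at least one new set was appended), or it coincides with $f(B_{0})_{\alpha_{0}-1}$, which by induction is contained in $\varphi(B_{0})\subseteq\varphi(A)$.

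The one point that genuinely needs care, and which I expect to be the main obstacle, is verifying that the concatenated sequence $f(A)$ really is a chain in~$\B$. Its tail consists of the sets $A_{\alpha_{0}},\ldots$ that landed in~$\B$ by the very definition of the stopping index; its head lies in~$\B$ by induction on $B_{0}$; and monotonicity within each piece is obvious. The delicate step is monotonicity \emph{across} the junction, i.e.\ the inclusion $f(B_{0})_{\alpha_{0}-1}\subseteq A_{\alpha_{0}}$. This is precisely where \ref{enu:f subset} applied to $B_{0}$ is used: it gives $f(B_{0})_{\alpha_{0}-1}\subseteq\varphi(B_{0})$, and combining this with $\varphi(B_{0})=B_{0}\cup\{\pi(n+1),\ldots,\pi(n+\alpha_{0})\}\subseteq A_{\alpha_{0}}$ (valid because $B_{0}\subseteq A$) closes the argument. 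Thus all five properties have to be maintained simultaneously through the recursion, with \ref{enu:f subset} at the previous level being the critical invariant that keeps \ref{enu:f chain} alive at the next.
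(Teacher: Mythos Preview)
Your construction and verification are essentially identical to the paper's: the same recursion on $|A|$, the same $\beta=\alpha_0=\max_{B\subsetneq A}\alpha(B)$, the same concatenation of $f(B_0)$ with the newly encountered blue sets, and the same crucial observation that \ref{enu:f subset} for $B_0$ is what makes the junction in $f(A)$ monotone. One small imprecision: \ref{enu:phi containment} is not controlled \emph{entirely} by the $[n]$-part---you also need that the $[n+k]\setminus[n]$-parts are nested, which follows from \ref{enu:alpha order}; the paper phrases this as ``\ref{enu:alpha order} and \ref{enu:alpha phi} together imply \ref{enu:phi containment}.''
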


\begin{proof}
We construct the functions $\varphi$, $\alpha$ and~$f$ recursively, and simultaneously prove the above properties by
induction: we set the values of these functions on a set $A\in\Q_{n}$ in such a way that they only depend on the values
of the functions on proper subsets of~$A$. (This includes the case of $A=\emptyset$ where no proper subsets exist, which
we do not treat in a special way for most of the proof. One can also consider the proof as a recursion and induction on the
size of the set $A$.) Let us fix an $A\in\Q_{n}$. Now we will define the values $\varphi(A)$, $\alpha(A)$ and $f(A)$,
and then prove that \propertyref{1} to \lastproperty{} hold for this set~$A$ under the assumption that they hold for
every proper subset of~$A$.

If there exists a $B\subsetneqq A$ such that $\varphi(B)=\fail$, then we pick such a set~$B$ arbitrarily, and set $\varphi(A)=\fail$,
$\alpha(A)=k+1$ and $f(A)=f(B)$. Otherwise let 
\begin{align*}
\beta & =\min\bigl\{ i\in\{0,1,\ldots,k\}:\left(\forall B\subsetneqq A:\alpha(B)\le i\right)\bigr\}\\
 & =\begin{cases}
{\displaystyle \max_{{\scriptscriptstyle B\subsetneqq A}}\alpha(B)} & \tif A\ne\emptyset,\\
0 & \tif A=\emptyset,
\end{cases}
\end{align*}
 and let 
\[
C=A\cup\{\pi(n+1),\pi(n+2),\ldots,\pi(n+\beta)\}=A\cup\left(\bigcup_{B\subsetneqq A}\varphi(B)\right)
\]
 (note that $\{\pi(n+1),\pi(n+2),\ldots,\pi(n+\beta)\}=\emptyset$ if $\beta=0$). We get the last equality by applying \ref{enu:alpha phi}
to the proper subsets of~$A$. We want $\varphi(A)$ to be a superset of $C$. If $C\in\R$, we set $\varphi(A)=C$. If
$C\in\B$, we keep adding $\pi(n+\beta+1),\pi(n+\beta+2),\ldots$ to it, until the set is not in~$\B$, if possible. That
is, let 
\[
\alpha(A)=\begin{cases}
\min\left\{ \begin{gathered}i\in\{\beta,\beta+1,\ldots,k\}:\\
C\cup\{\pi(n+\beta+1),\pi(n+\beta+2),\ldots,\pi(n+i)\}\in\R
\end{gathered}
\right\}  & \text{if such \ensuremath{i} exists,}\\
k+1 & \totherwise.
\end{cases}
\]
Then let 
\[
\varphi(A)=\begin{cases}
C\cup\{\pi(n+\beta+1),\pi(n+\beta+2),\ldots,\pi(n+\alpha(A))\} & \tif\alpha(A)\le k,\\
\fail & \tif\alpha(A)=k+1.
\end{cases}
\]
 Note that in the first case, 
\begin{gather*}
C\cup\{\pi(n+\beta+1),\pi(n+\beta+2),\ldots,\pi(n+\alpha(A))\}\\
=A\cup\{\pi(n+1),\pi(n+2),\ldots,\pi(n+\alpha(A))\}.
\end{gather*}
Furthermore if $A=\emptyset$, set $f(A)=()$, an empty chain. Otherwise pick a set $B\subsetneqq A$ such that $\alpha(B)=\beta$.
We set $f(A)$ to be a chain of length $\alpha(A)$ in~$\B$: 
\[
f(A)_{i}=\begin{cases}
f(B)_{i} & \tif0\le i<\beta,\\
A\cup\{\pi(n+1),\pi(n+2),\ldots,\pi(n+i)\} & \tif\beta\le i<\alpha(A).
\end{cases}
\]

Note that these definitions of $\varphi(A)$, $\alpha(A)$ and~$f(A)$ only depend on the values of these functions for
proper subsets of~$A$, so our recursive definitions make sense. It is easy to check that the definitions of~$\varphi$
and~$\alpha$ satisfy \ref{enu:alpha order}~and~\ref{enu:alpha phi}, which together imply \ref{enu:phi containment}.
\ref{enu:f chain}~and~\ref{enu:f subset} are also trivially satisfied when $\alpha(A)=0$. If $\alpha(A)=\beta=k+1$,
we have defined $f(A)=f(B)$ for some $B\subsetneqq A$ such that $\varphi(B)=\fail$; then \ref{enu:f chain} follows because
it holds for~$B$ by induction, and \ref{enu:f subset} is trivial.

Now we prove \ref{enu:f chain} and \ref{enu:f subset} when $\alpha(A)>0$ and $\beta\le k$. In the case where $\alpha(A)=\beta$
(equivalently if $C\in\R$, $\varphi(A)=C$ and $\alpha(A)=\alpha(B)$), then $f(A)=f(B)$ is a chain satisfying \ref{enu:f chain}
by induction. Since \ref{enu:f subset} holds for~$B$ by induction, we get that $f(A)_{\alpha(A)-1}=f(B)_{\alpha(B)-1}\subset\varphi(B)\subset\varphi(A)$,
so \ref{enu:f subset} is satisfied for~$A$ as well. If $\alpha(A)>\beta$, then \ref{enu:f subset} follows from the definitions
of $f(A)$ and~$\varphi(A)$. Furthermore, $A\cup\{\pi(n+1),\pi(n+2),\ldots,\allowbreak\pi(n+i)\}\in\B$ for $\beta\le i<\alpha(A)$
because $\alpha(A)$~was chosen as the smallest~$i\ge\beta$ such that $A\cup\{\pi(n+1),\pi(n+2),\ldots,\pi(n+i)\}\in\R$;
this is enough to show \ref{enu:f chain} if~$\beta=0$. Finally, if $\alpha(A)>\beta>0$, then $f(A)$~is obtained by
concatenating the chains $f(B)$ and $\bigl(A\cup\{\pi(n+1),\pi(n+2),\ldots,\pi(n+i)\}\bigr)_{\beta\le i<\alpha(A)}$. By
induction $f(B)$~is a chain satisfying the conditions of \ref{enu:f chain}, and $B$ satisfies \ref{enu:f subset}, so
$f(B)_{\beta-1}\subset\varphi(B)$. Using that \ref{enu:alpha phi} holds for~$B$ by induction, we also have $\varphi(B)=B\cup\{\pi(n+1),\pi(n+2),\ldots,\pi(n+\alpha(B))\}\subsetneqq A\cup\{\pi(n+1),\allowbreak\pi(n+2),\ldots,\pi(n+\beta)\}$
(recall that $B\subsetneqq A$ and $\beta=\alpha(B)$). Thus $f(A)$~is indeed a chain satisfying \ref{enu:f chain}. The
proof of the properties \propertyref{1} to \lastproperty{} of $\varphi$, $\alpha$ and~$f$ is now complete.
\end{proof}
For an arbitrary permutation $\pi:[n+k]\setminus[n]\rightarrow[n+k]\setminus[n]$, let $\varphi^{\pi}$, $\alpha^{\pi}$
and $f^{\pi}$ be the maps given by \prettyref{lem:functions}. If $\Im\varphi^{\pi}\subseteq\R$, then $\varphi^{\pi}$~shows
that $\Q_{n}$~is an induced subposet of~$\R$ by \ref{enu:phi containment}. Assume that this is not the case. Then, for
some $A\in\Q_{n}$, $\varphi^{\pi}(A)=\fail$, $\alpha^{\pi}(A)=k+1$ by~\ref{enu:alpha phi}, and $f^{\pi}(A)$~is a chain
of length $k+1$ in~$\B$ by~\ref{enu:f chain}. By \ref{enu:f chain}, we have $\pi(n+i)=\left(f^{\pi}(A)_{i}\setminus f^{\pi}(A)_{i-1}\right)\setminus[n]$
when $1\le i\le\alpha^{\pi}(A)-1$, so if $\alpha^{\pi}(A)=k+1$, then one can recover the permutation $\pi$ from the chain~$f^{\pi}(A)$.

Under our assumption that $\Q_{n}$~is not an induced subposet of~$\R$, we get a distinct chain $f^{\pi}$ of length $k+1$
in~$\B$ for each of the $k!$ permutations~$\pi$ of $[n+k]\setminus[n]$, with the property that 
\[
\forall\,0\le i\le k:f_{i}^{\pi}\setminus[n]=\{\pi(n+1),\pi(n+2),\ldots,\pi(n+i)\}.
\]
We claim that the map $\pi\mapsto\left(f_{0}^{\pi},f_{k}^{\pi}\right)$ is injective. Let $\pi_{1}$ and~$\pi_{2}$ be two
different permutations of $[n+k]\setminus[n]$. Let $i=\min_{j\in\{0,\ldots,k\}}\pi_{1}(n+j)\neq\pi_{2}(n+j)$. Then $\pi_{1}(i)\in f_{i}^{\pi_{1}}$,
$\pi_{1}(i)\notin f_{i}^{\pi_{2}}$,$\pi_{2}(i)\in f_{i}^{\pi_{2}}$ and $\pi_{2}(i)\notin f_{i}^{\pi_{1}}$, so $f_{i}^{\pi_{1}}$
and $f_{i}^{\pi_{2}}$ are unrelated. So if $f_{0}^{\pi_{1}}=f_{0}^{\pi_{2}}$ and $f_{k}^{\pi_{1}}=f_{k}^{\pi_{2}}$, then
$\B$~would contain an induced copy of $\Q_{2}$, a contradiction.

Since the map $\pi\mapsto\left(f_{0}^{\pi},f_{k}^{\pi}\right)$ is injective, 
\[
k!\le\left(2^{n+k}\right)^{2}=2^{2(n+k)}.
\]
 Approximating the left-hand side:
\[
k!>\left(\frac{k}{e}\right)^{k}=2^{k(\log_{2}k-\log_{2}e)}\text{, so}
\]
\begin{equation}
k(\log_{2}k-\log_{2}e)<2(n+k).\label{eq:2(n+k)}
\end{equation}
 Since $k=\left\lfloor c\frac{n}{\log_{2}n}\right\rfloor $, 
\begin{equation}
k\log_{2}k>\left(c\frac{n}{\log_{2}n}-1\right)\left(\log_{2}c+\log_{2}n-\log_{2}\log_{2}n-1\right)=cn(1-o(1)).\label{eq:cn}
\end{equation}
Since $c>2$, \eqref{eq:cn} contradicts \eqref{eq:2(n+k)} for sufficiently large~$n$. This completes the proof of \prettyref{thm:Upperbound}.
\begin{remark*}
It follows the above proof that for all $n\ge2$, we have $R(\Q_{2},\Q_{n})\le\nobreak n+\nobreak6.14\frac{n}{\log_{2}n}$.
Here we give a sketch of the calculations.

For $c=6.14$, we have $k=\nobreak\bigl\lfloor6.14\frac{n}{\log_{2}n}\bigr\rfloor>5.611\frac{n}{\log_{2}n}$ for every integer
$n\ge2$, and therefore we have $k\log_{2}k>5.611\frac{n}{\log_{2}n}\allowbreak\bigl(\log_{2}n\bigl(1-\frac{\log_{2}\log_{2}n}{\log_{2}n}\bigr)+\log_{2}5.611\bigr)\ge2.977n+13.96\frac{n}{\log_{2}n}$.
Using \eqref{eq:2(n+k)}, it can be shown that $0.8797k\log_{2}k\le k(\log_{2}k-\log_{2}e)\overset{{\scriptscriptstyle \eqref{eq:2(n+k)}}}{<}2n+12.28\frac{n}{\log_{2}n}$
for every $n\ge2$, contradicting the lower bound on $k\log_{2}k$ shown earlier.
\end{remark*}

\section{\label{sec:lower}Lower bounds}

\subsection{\label{subsec:ours-induced}An explicit construction showing $R(\protect\Q_{2},\protect\Q_{n})\ge n+3$}

In this subsection, we prove a special case of \prettyref{thm:Lowerbound} to illustrate the basic ideas of the construction.
The fully general proof of \prettyref{thm:Lowerbound}, presented in \prettyref{subsec:ours-general}, is significantly more
involved (primarily due the fact that it is more difficult to deduce properties of a weak map $\Q_{n}\rightarrow\Q_{n+m}$).
\begin{thm}
\label{prop:m_is_2-1}For $n\ge18$, there exist $\B,\R\subset\Q_{n+2}$ such that $\B\sqcup\R=\Q_{n+2}$, $\Q_{2}$~is
not an induced subposet of~$\B$, and $\Q_{n}$~is not an induced subposet of~$\R$.
\end{thm}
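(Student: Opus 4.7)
The plan is to produce an explicit partition $\Q_{n+2} = \B \sqcup \R$ and to verify the two non-containment statements. A natural candidate for $\B$ is the family of all singletons and co-singletons of $[n+2]$, together with a small additional family $\B_{1}$ of intermediate-level sets, described below. The singletons-and-co-singletons part alone is $\Q_{2}$-free, since it contains sets of only two sizes, $1$ and $n+1$, while an induced $\Q_{2}$ needs a chain of length~$3$ among its four elements. However, this family is not enough on its own: the map $f:\Q_{n}\to\Q_{n+2}$ given by $f(\emptyset)=\emptyset$, $f([n])=[n+2]$, and $f(A)=A\cup\{n+1\}$ for $1\le|A|\le n-1$ is an induced embedding whose image lies entirely outside singletons and co-singletons, so $\B_{1}$ must be chosen to meet every such ``exotic'' embedding.

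To guide the design of $\B_{1}$, I would rely on the structural observation that for any induced embedding $f:\Q_{n}\to\Q_{n+2}$ one has $|A|\le|f(A)|\le|A|+2$ for every $A\subseteq[n]$; indeed $|f(A')|>|f(B')|$ whenever $B'\subsetneqq A'$, which iterated gives the lower bound, and the upper bound is dual. Consequently, the slack $\delta(A)=|f(A)|-|A|$ is a monotone function $\Q_{n}\to\{0,1,2\}$ with $\delta(\emptyset)=0$ and $\delta([n])=2$, and any induced embedding is parametrized by the choice of $X_{0}=f(\emptyset)$, of $S$ with $f([n])=X_{0}\cup S$ (where $|S|\in\{n,n+1,n+2\}$ and $|X_{0}|+|S|\le n+2$), and of an induced embedding $\Q_{n}\hookrightarrow[X_{0},X_{0}\cup S]\cong\Q_{|S|}$. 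This reduces the space of embeddings to a short list of cases indexed by $(|X_{0}|,|S|)$; when $|S|=n$ the image is forced to coincide with the entire interval $[X_{0},X_{0}\cup S]$, and for $|S|\in\{n+1,n+2\}$ one iterates the same analysis one or two more times.

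With this handle, the construction of $\B_{1}$ aims to include, for each case, at least one set that must appear in every image. The proof then splits into two parts: (a) verifying that $\B$ has no induced $\Q_{2}$, by case analysis on the sizes of a hypothetical $\Q_{2}$-copy; and (b) verifying that every induced embedding meets $\B$, by working through the classification above. The main obstacle is the tension between (a) and (b): extra intermediate blue sets readily combine with singletons and co-singletons to form blue $\Q_{2}$-copies. A typical bad configuration is a singleton $W\subsetneqq X,Y$ for two incomparable intermediate blue sets $X,Y$, capped by a co-singleton $Z\supseteq X\cup Y$. Thus $\B_{1}$ must be structurally restricted to avoid such four-tuples, and the hypothesis $n\ge 18$ likely enters as the threshold at which the structural constraints on $\B_{1}$ can be reconciled with covering every embedding in the classification.
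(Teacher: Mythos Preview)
Your plan has a genuine and fatal gap: with singletons and co-singletons as the base blue layers, \emph{no} choice of $\B_1\subseteq\{S:2\le|S|\le n\}$ can work, for any $n$. First, the classification of embeddings sharpens more than you indicate: once singletons and co-singletons are blue, one checks that every order-embedding $f:\Q_n\to\R$ has $f(\emptyset)=\emptyset$, $f([n])=[n+2]$, and there exist $c\ne c'$ in $[n+2]$ and a bijection $\psi:[n]\to[n+2]\setminus\{c,c'\}$ with $f(A)=\{c\}\cup\psi[A]$ for $1\le|A|\le n-1$. Hence the image, as a set, is exactly $\{\emptyset,[n+2]\}\cup\{S:c\in S,\ c'\notin S,\ 2\le|S|\le n\}$, and this set really is the image of an embedding into $\R$ for every ordered pair $(c,c')$. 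So blocking all embeddings forces: for every ordered pair $(c,c')$, some $S\in\B_1$ has $c\in S$, $c'\notin S$.

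On the other hand, $\Q_2$-freeness of $\B$ forces that any two incomparable $X,Y\in\B_1$ satisfy $X\cap Y=\emptyset$ or $X\cup Y=[n+2]$ (otherwise a singleton in $X\cap Y$ and a co-singleton missing an element of $\overline{X\cup Y}$ complete a blue $\Q_2$). These two requirements are incompatible. Take a minimal $X\in\B_1$; it has $|X|\ge 2$. For $c,c'\in X$, any separating $S\in\B_1$ must satisfy $S\supseteq\overline{X}$ (it meets $X$, cannot be a proper subset of $X$ by minimality, cannot contain $X$ since $c'\notin S$, so must cover). Writing $S=\overline{X}\cup T$ with $T\subseteq X$, one finds $|T|\le|X|-2$, and any two incomparable such $T$'s must cover $X$. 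Thus the problem reproduces itself on the ground set $X$ with the \emph{stricter} constraint ``incomparable $\Rightarrow$ cover'', and an easy induction on $|X|$ shows this is impossible. The paper's construction sidesteps this obstruction by taking the two full blue layers at sizes $k$ and $k+3$ with $k=\lfloor n/2\rfloor$: the gap of $3$ means that two size-$(k+1)$ middles of a blue $\Q_2$ are forced to have symmetric difference exactly $2$, so it suffices to pick one ``blocker'' set $C_{y,z}\in\binom{[n+2]}{k+1}$ per ordered pair $(y,z)$, all at pairwise symmetric difference $\ge 4$; the bound $n\ge18$ is exactly what makes this greedy choice possible.
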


Let $k=\left\lfloor \frac{n}{2}\right\rfloor $. Let $\B\supset\binom{[n+2]}{k}\cup\binom{[n+2]}{k+3}$, with some sets of
size $k+1$ which we will add later. Assume for a contradiction that $\Q_{n}$~is an induced subposet of~$\R$. Let $\varphi:\Q_{n}\rightarrow\R$
be an injection such that $\varphi(A)\subseteq\varphi(B)$ if and only if $A\subseteq B$.

For any maximal chain $\emptyset\subsetneqq A_{1}\subsetneqq\ldots\subsetneqq A_{n-1}\subsetneqq[n]$, the sets in its image
satisfy $\varphi(\emptyset)\subsetneqq\varphi(A_{1})\subsetneqq\ldots\subsetneqq\varphi(A_{n-1})\subsetneqq\varphi([n])$,
and none of the sets in the image are of size $k$ or $k+3$. So for every $A\subseteq[n]$, 
\begin{equation}
\left|\varphi(A)\right|=\begin{cases}
\left|A\right| & \tif\left|A\right|\le k-1,\\
\left|A\right|+1 & \tif k\le\left|A\right|\le k+1,\\
\left|A\right|+2 & \tif k+2\le\left|A\right|,
\end{cases}\label{eq:levels-1-1}
\end{equation}
 thus the image of every singleton is a singleton (and the image of the complement of every singleton is the complement of
a singleton).

For $a\in[n]$, let $\tilde{\varphi}(a)$ denote the unique element of $\varphi(\{a\})$. The map $\tilde{\varphi}:[n]\rightarrow[n+2]$
is an injection. Note that, for a set $A\subseteq[n]$, $\tilde{\varphi}[A]$ denotes the image of $A$ under $\tilde{\varphi}$,
and for a set $B\subseteq[n+m]$, $\tilde{\varphi}^{-1}[B]$ denotes the preimage of $B$ under $\tilde{\varphi}$.

Let $X=\left\{ \tilde{\varphi}(a):a\in[n]\right\} $ and $Y=[n+2]\setminus X=\{y,z\}$. We have $\left|X\right|=n$ and $\left|Y\right|=2$.
We claim that for every $A\subseteq[n]$, $\varphi(A)\cap X=\tilde{\varphi}[A]$. Indeed, for every $b\in X$, there is an
$a\in[n]$ such that $\tilde{\varphi}(a)=b$, and we have $b=\tilde{\varphi}(a)\in\tilde{\varphi}[A]\Longleftrightarrow a\in A\Longleftrightarrow\{a\}\subseteq A\Longleftrightarrow\{\tilde{\varphi}(a)\}=\varphi(\{a\})\subseteq\varphi(A)\Longleftrightarrow b=\tilde{\varphi}(a)\in\varphi(A)$.

From~\eqref{eq:levels-1-1}, $\varphi(A)$ contains neither $y$ nor $z$ if $\left|A\right|\le k-1$, exactly one of them
if $k\le\left|A\right|\le k+1$, and both if $k+2\le\left|A\right|$.
\begin{claim}
\label{claim:y-1-1}For every set $A\in\binom{[n]}{k}$, $\varphi(A)$ contains the same element of $Y$.
\end{claim}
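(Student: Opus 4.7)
The plan is to show that two $k$-subsets of $[n]$ differing in a single element must be mapped by $\varphi$ to sets containing the same element of $Y$, and then to invoke the connectedness of the Johnson graph to propagate this across all of $\binom{[n]}{k}$.

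First, I would make the relevant sizes precise. By~\eqref{eq:levels-1-1}, for any $A\in\binom{[n]}{k}$, $\varphi(A)$ has size $k+1$, and from the paragraph preceding the claim, $\varphi(A)\cap X=\tilde{\varphi}[A]$ has size exactly $k$; hence $\varphi(A)\cap Y$ is a single element of $\{y,z\}$. Analogously, for $A\in\binom{[n]}{k+1}$, $\varphi(A)$ has size $k+2$ and contains exactly one element of $Y$.

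The key observation is the following: if $A_{1},A_{2}\in\binom{[n]}{k}$ satisfy $|A_{1}\cap A_{2}|=k-1$, then $A=A_{1}\cup A_{2}\in\binom{[n]}{k+1}$ is a common superset of both. Since $\varphi$ is an order-embedding, $\varphi(A_{1}),\varphi(A_{2})\subseteq\varphi(A)$; but $\varphi(A)$ contains exactly one element of $Y$, so both $\varphi(A_{1})$ and $\varphi(A_{2})$ must contain that same element.

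To conclude, I would use the fact that the Johnson graph $J(n,k)$, whose vertices are $\binom{[n]}{k}$ and whose edges connect pairs intersecting in a set of size $k-1$, is connected for $1\le k\le n-1$ --- which holds here since $k=\lfloor n/2\rfloor$ and $n\ge18$. Walking along such a path of $k$-subsets, the element of $Y$ in $\varphi(A)$ cannot change between consecutive vertices by the previous paragraph, so it is the same for every $A\in\binom{[n]}{k}$. I do not anticipate a real obstacle: once the common-$(k+1)$-superset observation is made, the rest is a routine connectivity argument.
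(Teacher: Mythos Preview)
Your proposal is correct and follows essentially the same approach as the paper: both arguments use that adjacent $k$-sets in the Johnson graph have a common $(k+1)$-superset whose image under $\varphi$ contains only one element of~$Y$, and then invoke the connectedness of the Johnson graph. The paper phrases this as a short proof by contradiction, while you give the equivalent direct version, but the content is identical.
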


\begin{proof}
Assume for a contradiction that some sets of the form $\varphi(A)$ contain $y$, and others contain $z$. Since the Johnson
graph -- whose vertices are $\binom{[n]}{k}$, and whose edges connect sets with symmetric difference $2$ -- is connected,
there would be two sets $A,B\in\binom{[n]}{k}$ with a symmetric difference of size $2$ such that $y\in\varphi(A)$ and
$z\in\varphi(B)$. Then $\left|A\cup B\right|=k+1$, and $\varphi(A\cup B)$ would contain both $y$ and $z$ as it would
have to be a superset of both $\varphi(A)$ and $\varphi(B)$, contradicting that it contains exactly one of $y$ and $z$.
\end{proof}
Now we specify which sets of $\binom{[n+2]}{k+1}$ are added to~$\B$ in addition to $\binom{[n+2]}{k}\cup\binom{[n+2]}{k+3}$.
Our goal is to add these sets in such a way that for every map $\varphi:\Q_{n}\to\Q_{n+2}$, assuming that $\Im\varphi\subseteq\R$,
and $\varphi$ is an order-embedding, the above observations lead to a contradiction. (The map~$\varphi$, and the variables
dependent on it such as $X$, $Y$, $y$ and $z$, are not fixed now; we have to set $\B$ in such a way that the existence
of any order-embedding $\varphi:\Q_{n}\to\R$ leads to a contradiction.) For every distinct~$y,z\in[n+2]$, pick a set $C_{y,z}\in\binom{[n+2]}{k+1}$
such that $y\in C_{y,z}$ but $z\notin C_{y,z}$, and $C_{y,z}$ is at a symmetric difference of size at least $4$ from
every previously chosen set $C_{y',z'}$, and add $C_{y,z}$ to $\B$. We can do this by greedily picking sets~$C_{y,z}$
one-by-one: At each step, we have picked at most $(n+2)(n+1)-1$ sets so far, and each previously picked set~$C_{y',z'}$
blocks at most $1+k(n-k)$ choices (because there are at most that many sets containing $y$ but not $z$ with a symmetric
difference of size at most~2 from~$C_{y',z'}$). In total, there are $\binom{n}{k}$ sets $C\in\binom{[n+2]}{k+1}$ that
satisfy $y\in C$ and $z\notin C$. For $n\ge18$ and $k=\left\lfloor \frac{n}{2}\right\rfloor $, we have 
\[
\binom{n}{k}>\bigl((n+2)(n+1)-1\bigr)\bigl(1+k(n-k)\bigr),
\]
so we can always choose a set $C_{y,z}$ which satisfies the required conditions.

After adding such sets $C_{y,z}$ for every distinct $y,z\in[n+2]$, the resulting family $\B$ will not contain a copy of
$\Q_{2}$. Indeed, a copy of $\Q_{2}$ in~$\B$ would have to consist of a set of size~$k$, a set of size $k+3$, and
two sets of size $k+1$; but the latter two sets would need to have a symmetric difference of size~$2$.

Now assume for a contradiction that $\R=2^{[n+2]}\setminus\B$ contains an induced copy of~$\Q_{n}$. Consider an arbitrary
injection $\varphi:\Q_{n}\rightarrow\R$, and define $\tilde{\varphi}$, $X$ and $Y=\{y,z\}$ as before, and apply \prettyref{claim:y-1-1}.
We can assume without loss of generality that for every $A\in\binom{[n]}{k}$, we have $y\in\varphi(A)$, and thus $\varphi(A)=\tilde{\varphi}[A]\cup\{y\}$.
There is a set $C_{y,z}\in\binom{[n+2]}{k+1}\cap\B$ such that $y\in C_{y,z}$, but $z\notin C_{y,z}$. We have $\tilde{\varphi}^{-1}[C_{y,z}]\in{[n] \choose k}$,
and 
\[
\varphi\left(\tilde{\varphi}^{-1}[C_{y,z}]\right)=\left\{ \tilde{\varphi}(a):a\in[n],\tilde{\varphi}(a)\in C_{y,z}\right\} \cup\{y\}=C_{y,z}\in\B,
\]
contradicting that the image of~$\varphi$ is in~$\R$.

\subsection{\label{subsec:ours-general}An explicit construction showing \textmd{\normalsize{}$R_{w}(\protect\Q_{m},\protect\Q_{n})\ge m+n+1$}}

\prettyref{thm:Lowerbound} will be an immediate consequence of the following \MakeLowercase{\crtcrefnamebylabel{prop:noninduced-lower-general}}.
\begin{thm}
\label{prop:noninduced-lower-general}Let $n,m\in\naturals$ such that $m\ge2$ and $n\ge\sqrt{32m+260}+18$. There exist
$\B,\R\subset\Q_{n+m}$ such that $\B\sqcup\R=\Q_{n+m}$, $\Q_{m}$~is not a weak subposet of~$\B$, and $\Q_{n}$~is
not a weak subposet of~$\R$.
\end{thm}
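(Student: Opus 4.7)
The plan is to generalize the explicit construction of \prettyref{subsec:ours-induced} to arbitrary $m$ in the weak setting. With $k\approx n/2$, I would let $\B$ be the union of $m$ carefully chosen complete layers of $\Q_{n+m}$, together with a family of ``blocking'' sets placed at one gap level between two consecutive blue layers. The $m$ full layers serve two purposes simultaneously: since any chain meets each layer at most once, $\B$ has maximum chain length less than $2^m$ provided the blocking sets are kept at a single additional level, and this precludes any chain long enough to host a weak $\Q_m$; and any image chain of length $n+1$ forced into $\R\subseteq\Q_{n+m}$ must skip exactly these $m$ layers, uniquely pinning down $|\varphi(A)|$ as a function of $|A|$ by an analog of \eqref{eq:levels-1-1}. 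The blocking sets generalize the $C_{y,z}$: I would pick one blocking set $C_{Y,\sigma}$ for each pair $(Y,\sigma)$ with $Y\subseteq[n+m]$ an $m$-subset and $\sigma$ a designated element of $Y$ (the ``element of $Y$ common to all size-$k$ images''), chosen greedily at symmetric difference at least $4$ from every previously chosen blocking set so that $\B$ cannot contain any weak $\Q_m$.

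The hypothesis $n\ge\sqrt{32m+260}+18$ should be exactly what is needed for this greedy construction to succeed: the number of candidate pairs $(Y,\sigma)$ is of order $m\binom{n+m}{m}$, each previously chosen blocking set rules out $O(k(n-k))$ further candidates at the blocking level, and the supply of admissible blocking-level sets satisfying the required containment/noncontainment of two elements of $Y$ must dominate the product. Verifying that $\B$ contains no weak copy of $\Q_m$ reduces to a short case analysis: any weak $\Q_m$ in $\B$ would need $2^m$ elements whose image contains an $(m+1)$-chain, and the chain bound from the layers plus the pairwise symmetric-difference condition on blocking sets forces two blocking sets at symmetric difference less than $4$, contradicting the greedy choice.

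Now suppose for contradiction that $\varphi:\Q_n\to\R$ is a weak order-embedding. For any maximal chain in $\Q_n$, its image is a chain of length $n+1$ in $\R$ avoiding the $m$ blue layers, so $|\varphi(A)|$ is forced to depend only on $|A|$; in particular all singletons go to sets of the same size $s$, and since sets of equal size are comparable only if equal, their images are pairwise incomparable, yielding a well-defined injection $\tilde\varphi:[n]\to[n+m]$ with $\varphi(A)\cap X=\tilde\varphi[A]$ for $X=\tilde\varphi([n])$ and $Y=[n+m]\setminus X$ of cardinality $m$. A Johnson-graph connectivity argument on $\binom{[n]}{k}$ then shows that all size-$k$ sets share a common ``$Y$-part'' $\sigma$ in their images, so $\varphi$ sends $\tilde\varphi^{-1}[C_{Y,\sigma}\cap X]\in\binom{[n]}{k}$ to $C_{Y,\sigma}\in\B$, the desired contradiction. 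The main obstacle, and what makes this proof significantly more involved than the induced case, is the weakness of the embedding: incomparable pairs in $\Q_n$ need not have incomparable images, so level-preserving behavior cannot be deduced directly but must be forced via the chain argument, which in turn requires \emph{all} $m$ layers to lie entirely in $\B$; arranging $\B$ so that it simultaneously covers $m$ full layers, contains enough blocking sets for every $(Y,\sigma)$-configuration, and yet avoids any weak copy of $\Q_m$ is the delicate part of the argument.
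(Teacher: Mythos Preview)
There are two genuine gaps in your plan, and each is precisely where the paper's argument diverges from the $m=2$ induced case.

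First, the greedy construction of blocking sets does not survive the passage to general~$m$. You need one blocking set for each of the $m\binom{n+m}{m}$ pairs $(Y,\sigma)$, all at pairwise symmetric difference at least~$4$ inside a single layer $\binom{[n+m]}{k+1}$. But the hypothesis of the theorem allows $m$ of order~$n$ (and this range is needed for the diagonal case of \prettyref{thm:Lowerbound}); already for $m>n/2$ one has $m\binom{n+m}{m}>\binom{n+m}{k+1}$, so there are not even enough sets at the blocking level to serve as distinct blocking sets, let alone ones at mutual distance~$\ge4$. The condition $n\ge\sqrt{32m+260}+18$ is far too weak to make the greedy count go through. The paper sidesteps this entirely by replacing the greedy family with a single algebraic constant-weight code~$\C$ (\prettyref{lem:modp code}, built from Olson's subset-sum theorem modulo a prime): the code is fixed once and for all, and the lemma guarantees that for \emph{every} relevant $(Y,\sigma)$ some set of the required shape already lies in~$\C$.

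Second, and more fundamentally, your structural claim $\varphi(A)\cap X=\tilde\varphi[A]$ is false for weak embeddings. From $a\in A$ you correctly get $\tilde\varphi(a)\in\varphi(A)$, but from $b\notin A$ you \emph{cannot} conclude $\tilde\varphi(b)\notin\varphi(A)$: that is exactly the missing ``only if'' direction. Hence the extra element of $\varphi(A)$ beyond $\tilde\varphi[A]$ may well lie in~$X$, not in your~$Y$, and your Johnson-graph argument for a common ``$Y$-part'' collapses. The paper repairs this by introducing a second injection $\varphi_2$ read off from co-singletons (so that $b\notin A$ does give $\varphi_2(b)\notin\varphi(A)$), proving that $\varphi_1$ and $\varphi_2$ agree outside a set~$E$ of size at most~$m$, and then replacing the simple connectivity argument by a substantially more delicate structural result (\prettyref{claim:y}, proved via the inductive \prettyref{lem:y inductive}) that accounts for the residual freedom in where the extra element $f(A)$ can land.
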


To see that \prettyref{thm:Lowerbound} follows from \prettyref{prop:noninduced-lower-general}, notice that we can assume
without loss of generality that $n\ge m$, so for $n\ge68$, we have $n\ge\sqrt{32n+260}+18$.

For values of $m$ less than~$67$, the threshold for~$n$ in the hypothesis of~\prettyref{prop:noninduced-lower-general}
is smaller than $68$: for instance, it holds for $m=2$ and $n\ge36$. Also note that in the proof of \prettyref{lem:modp code},
we use Bertrand's postulate to obtain a prime $N\le p<2(N-1)$. By finding the smallest prime greater than or equal to $N$,
one may be able to relax the requirement on $k$ in \prettyref{lem:modp code}, and thereby extend \prettyref{prop:noninduced-lower-general}
to somewhat smaller values of $n$, for a given $m$.

In the proof of \prettyref{prop:noninduced-lower-general}, we will use \prettyref{lem:modp code}, which will follow from
\prettyref{lem:modp subset}.
\begin{lem}[Olson \cite{olson}]
\label{lem:modp subset}Let $p$ be a prime, and let $A\subseteq[p]$ such that $\left|A\right|\ge\sqrt{4p-3}$. Then for
every $a\in\integers$, there is a subset $B\subseteq A$ such that 
\[
\sum B\equiv a\pmod p.
\]
\end{lem}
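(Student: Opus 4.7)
The plan is to show that under the hypothesis $|A| \geq \sqrt{4p-3}$, every residue modulo~$p$ arises as $\sum B$ for some $B \subseteq A$. Writing $\Sigma(A) \subseteq \integers/p\integers$ for the set of such residues, the goal is $\Sigma(A) = \integers/p\integers$. First, I would reduce to the case where $A$ consists of distinct \emph{nonzero} residues modulo~$p$: the only element of~$[p]$ reducing to~$0$ is~$p$ itself, and it contributes nothing to any subset sum, so we may discard it, losing at most one element.

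The heart of the argument is a quadratic lower bound: for any set $A'$ of $k$ distinct nonzero residues modulo~$p$, one has $|\Sigma(A')| \geq \min(p,\, 1 + k(k+1)/2)$. I would establish this by induction on~$k$, enumerating $A' = \{a_1, \ldots, a_k\}$ and setting $S_i = \Sigma(\{a_1, \ldots, a_i\})$; from the identity $S_i = S_{i-1} \cup (S_{i-1} + a_i)$, combined with the fact that $\integers/p\integers$ has no proper nontrivial subgroups, one obtains $|S_i| \geq |S_{i-1}| + 1$ via the Cauchy-Davenport theorem. The crucial refinement — where distinctness of the elements enters — is to upgrade this to an \emph{average} increment of~$i$, yielding the quadratic growth. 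Such an upgrade is the Erd\H{o}s-Heilbronn phenomenon for restricted sumsets, which I would execute either via the Dias da Silva-Hamidoune theorem or via the Combinatorial Nullstellensatz.

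Granted this bound, the conclusion is a short calculation: after the reduction above, $k + 1 \geq \sqrt{4p-3}$, which after a brief manipulation (reducing to $(p-1)^2 \geq 0$) implies $1 + k(k+1)/2 \geq p$. Hence $|\Sigma(A')| \geq p$, so $\Sigma(A') = \integers/p\integers$ and every residue is attained.

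The main technical input is thus the quadratic lower bound on $|\Sigma(A')|$: naive Cauchy-Davenport iteration only delivers linear growth, and extracting the tight quadratic rate requires either Olson's original combinatorial argument specific to $\integers/p\integers$ (as cited in the statement) or a polynomial-method substitute.
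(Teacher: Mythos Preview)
The paper does not prove this lemma at all: it is quoted verbatim from Olson~\cite{olson} and used as a black box in the proof of \prettyref{lem:modp code}. So there is no ``paper's own proof'' to compare against --- your proposal goes strictly beyond what the paper supplies.

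As for the content of your sketch: the outline is sound, and the numerics check out (from $k+1 \ge \sqrt{4p-3}$ one gets $(k+1)^2 \ge 4p-3$, hence $2k^2+2k \ge k^2+2k \ge 4p-4$, i.e.\ $1+k(k+1)/2 \ge p$). The one place to be careful is the quadratic lower bound $|\Sigma(A')| \ge \min\bigl(p,\,1+k(k+1)/2\bigr)$ itself: this \emph{is} essentially Olson's theorem, so invoking it is circular if the goal is to reprove the lemma from scratch. Your alternative route via Dias da Silva--Hamidoune or the Combinatorial Nullstellensatz is legitimate and independent of Olson, but note that the passage from restricted-sumset bounds (which control $j$-element subset sums for fixed~$j$) to the bound on \emph{all} subset sums is not entirely automatic, since sums of different sizes can collide modulo~$p$; you would need to spell out that step. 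In short: the strategy is correct, but the ``main technical input'' you flag at the end is precisely the theorem being cited.
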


\begin{lem}
\label{lem:modp code}Let $N,k\in\naturals$ such that $N>3$ and $k\ge\sqrt{8N-15}$. Then there is a constant-weight
code $\C\subset\binom{[N]}{k+1}$ such that the symmetric difference between any two sets is of size at least $4$, and the
following holds:
\begin{numberedstatement}
Let $n,m\in\naturals$ such that $n+m=N$ and $k\le n-\sqrt{8N-15}$. For every $Y\in\binom{[N]}{m}$ and $y\in Y$, there
is a set $C\in\binom{[N]\setminus Y}{k}$ such that $C\cup\{y\}\in\C$.\label{eq:modp code statement}
\end{numberedstatement}
\end{lem}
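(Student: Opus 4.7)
The plan is to define $\mathcal{C}$ as a level set of the sum map modulo a suitable prime. By Bertrand's postulate, pick a prime $p$ with $N \le p < 2(N-1)$, so that $p \le 2N - 3$ and hence $\sqrt{4p - 3} \le \sqrt{8N-15}$; then set
\[
\mathcal{C} = \Bigl\{S \in \binom{[N]}{k+1} : \sum_{i \in S} i \equiv 0 \pmod{p}\Bigr\}.
\]
Notice that this definition depends only on $N$ and $k$, as required (the parameters $n, m, Y, y$ enter only when verifying the coverage property).

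The minimum-distance condition is immediate. Two distinct sets $S_1, S_2 \in \mathcal{C}$ have the same size, so $|S_1 \triangle S_2|$ is even and at least $2$; if it were exactly~$2$, then $S_1 = (S_2 \setminus \{i\}) \cup \{j\}$ for some $i \ne j$ in $[N]$, whence $\sum S_1 - \sum S_2 = j - i$ satisfies $0 < |j - i| \le N - 1 < p$, contradicting $\sum S_1 \equiv \sum S_2 \pmod p$.

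For the coverage condition, fix $Y, y$ and write $Z = [N] \setminus Y = \{z_1 < z_2 < \cdots < z_n\}$; the task is to find $C \in \binom{Z}{k}$ with $\sum C \equiv -y \pmod p$. Put $t = \lceil \sqrt{4p - 3}\rceil$. Because $k$ and $n-k$ are integers each bounded below by $\sqrt{8N - 15} \ge \sqrt{4p - 3}$, both are at least $t$, and in particular $n - 2t \ge k - t \ge 0$. Fix any $B_M \in \binom{\{z_{t+1}, \ldots, z_{n-t}\}}{k - t}$, and for each $I \subseteq [t]$ define
\[
C_I = \bigl(\{z_1, \ldots, z_t\} \setminus \{z_i : i \in I\}\bigr) \cup \{z_{n+1-i} : i \in I\} \cup B_M \in \binom{Z}{k}.
\]
A direct calculation gives $\sum C_I = s_0 + \sum_{i \in I} d_i$, where $d_i = z_{n+1-i} - z_i$ and $s_0 = \sum_{j=1}^t z_j + \sum B_M$ is independent of $I$. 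The $d_i$ are strictly decreasing in $i$, hence are $t$ distinct positive integers in $[1, N-1] \subseteq [p]$; applying \prettyref{lem:modp subset} to $\{d_1, \ldots, d_t\}$ (which has size $t \ge \sqrt{4p - 3}$) produces an $I \subseteq [t]$ with $\sum_{i \in I} d_i \equiv -y - s_0 \pmod p$, and the corresponding $C_I$ is as desired.

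The conceptual heart of the argument is the pairing $z_i \leftrightarrow z_{n+1-i}$, which converts the task ``find a $k$-subset of $Z$ with prescribed sum mod $p$'' into the task ``find a subset of $\{d_1, \ldots, d_t\}$ with prescribed sum mod $p$,'' the latter being handled directly by Olson's lemma. The only point that needs a small amount of care is verifying the size inequalities $k \ge t$, $n - k \ge t$ and $n - 2t \ge k - t$ needed for the construction; all of these reduce to the observation that $k, n - k \in \naturals$ are each at least $\lceil \sqrt{8N - 15}\rceil \ge t$.
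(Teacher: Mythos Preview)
Your proof is correct and follows essentially the same route as the paper: choose a prime $p$ with $N\le p<2(N-1)$ via Bertrand, let $\mathcal{C}$ be a residue class of the sum map modulo $p$, and for the coverage property pair the smallest and largest elements of $[N]\setminus Y$ to produce distinct differences to which Olson's lemma applies. The only cosmetic differences are that you take the residue class $0$ (the paper uses an arbitrary $d$), you set $t=\lceil\sqrt{4p-3}\rceil$ rather than the paper's $l=\lceil\sqrt{8N-15}\rceil$, and you spell out the minimum-distance verification explicitly.
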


\begin{proof}
By Bertrand's postulate%
, there is a prime~$p$ such that $N\le p<2(N-1)$. Let $d\in[p]$ be a fixed constant, and let 
\[
\C=\left\{ S\in\binom{[N]}{k+1}:\sum S\equiv d\pmod p\right\} .
\]

Let $Y\in\binom{[N]}{m}$ and $y\in Y$. We have to find a set~$C$ that satisfies the condition in the statement. Let $l=\left\lceil \sqrt{8N-15}\right\rceil $.
It follows from the conditions of the lemma that $n\ge2l$. Let $[N]\setminus Y=\{x_{1},\ldots,x_{n}\}$ such that $x_{1}<x_{2}<\ldots<x_{n}$.
For $i=1,\ldots,l$, let $a_{i}=x_{i}$ and $b_{i}=x_{n-i+1}$. The numbers $b_{i}-a_{i}$ are in $[p]$, and they are different
because $b_{1}-a_{1}>b_{2}-a_{2}>\ldots>b_{l}-a_{l}$. Let $a=\sum_{i=1}^{l}a_{i}$. Let $E$ be a subset of $\{x_{l+1},\ldots,x_{n-l}\}$
with $k-l$ elements, and let $e=\sum E$. (It follows from the conditions that $0\le k-l\le n-2l$.)

Since $l\ge\sqrt{8N-15}\ge\sqrt{4p-3}$, by \prettyref{lem:modp subset}, there is a subset of $\{b_{1}-a_{1},\ldots,b_{l}-a_{l}\}$
such that its sum is congruent with $d-y-e-a$. That is, there is a set $I\subseteq[l]$ such that 
\[
\sum_{i\in I}(b_{i}-a_{i})\equiv d-y-e-a\pmod p.
\]
 Let 
\[
C=E\cup\left\{ \Padded{\begin{cases}
a_{i} & \tif i\notin I\\
b_{i} & \tif i\in I
\end{cases}:i\in[l]}\right\} .
\]

We have 
\[
\sum\left(C\cup\{y\}\right)=\sum E+\sum_{i=1}^{l}a_{i}+\sum_{i\in I}(b_{i}-a_{i})+y\equiv d\pmod p,
\]
 so $C\in\C$.
\end{proof}
 Let $k$ be an integer between $\sqrt{8(n+m)-15}$ and $n-1-\sqrt{8(n+m)-15}$ inclusive. (The conditions of the proposition
imply that $\sqrt{8(n+m)-15}+1\le n-1-\sqrt{8(n+m)-15}$, therefore such an integer $k$ exists.) Let $\B\supset\binom{[n+m]}{k}\cup\binom{[n+m]}{k+3}\cup\binom{[n+m]}{k+4}\cup\ldots\cup\binom{[n+m]}{k+m+1}\cup\C$,
where $\C$~is given by \prettyref{lem:modp code}, using $n+m$ in the place of~$N$. First we show that the family $\B$
does not contain a~$\Q_{m}$. Indeed, any two sets in~$\B$ of size $k+1$ have a symmetric difference of size at least~$4$.
A copy of~$\Q_{m}$ in~$\B$ would consist of a set of size~$k$, some sets of size $k+3,\ldots,k+m+1$ corresponding
to the sets of size~$2$~to~$m$ of the~$\Q_{m}$, and $m$~sets of size $k+1$ corresponding to the singletons of~$\Q_{m}$.
The latter $m$~sets would need to have a symmetric difference of size~$2$.

Assume for a contradiction that $\Q_{n}$~is a subposet of~$\R$. Let $\varphi:\Q_{n}\rightarrow\R$ be an injection that
preserves relations.

For any maximal chain $\emptyset\subsetneqq A_{1}\subsetneqq\ldots\subsetneqq A_{n-1}\subsetneqq[n]$, we have $\varphi(\emptyset)\subsetneqq\varphi(A_{1})\subsetneqq\ldots\subsetneqq\varphi(A_{n-1})\subsetneqq\varphi([n])$,
and none of the sets in the image are of size $k$ or $k+3,k+4,\ldots,k+m+1$. So for every $A\subseteq[n]$, 
\begin{equation}
\left|\varphi(A)\right|=\begin{cases}
\left|A\right| & \tif\left|A\right|\le k-1,\\
\left|A\right|+1 & \tif k\le\left|A\right|\le k+1,\\
\left|A\right|+m & \tif k+2\le\left|A\right|,
\end{cases}\label{eq:levels}
\end{equation}
 thus the image of every singleton is a singleton, and the image of the complement of every singleton is the complement of
a singleton).

For $a\in[n]$, let $\varphi_{1}(a)$ denote the unique element of $\varphi(\{a\})$, and let $\varphi_{2}(a)$ denote the
unique element of $[n+m]\setminus\varphi\left([n]\setminus\{a\}\right)$. Note that, for a set $A\subseteq[n]$, $\varphi_{i}[A]$
denotes the image of $A$ under $\varphi_{i}$, and for a set $B\subseteq[n+m]$, $\varphi_{i}^{-1}[B]$ denotes the preimage
of $B$ under $\varphi_{i}$.

The maps $\varphi_{1}$ and $\varphi_{2}$ are injections. Furthermore, for any distinct $a,b\in[n]$, it holds that $\{a\}\subseteq[n]\setminus\{b\}$,
so $\{\varphi_{1}(a)\}=\varphi(\{a\})\subseteq\varphi\left([n]\setminus\{b\}\right)=[n+m]\setminus\{\varphi_{2}(b)\}$, so
$\varphi_{1}(a)\ne\varphi_{2}(b)$. Now take the sets $\left\{ \varphi_{1}(a),\varphi_{2}(a)\right\} \subseteq[n+m]$ for
each $a\in[n]$; these sets have 1~or~2 elements, depending on whether $\varphi_{1}(a)=\varphi_{2}(a)$. Based on the observations
in this paragraph, if $a\ne b$, we have
\[
\left\{ \varphi_{1}(a),\varphi_{2}(a)\right\} \cap\left\{ \varphi_{1}(b),\varphi_{2}(b)\right\} =\emptyset.
\]
 Since $\bigcup_{a\in[n]}\{\varphi_{1}(a),\varphi_{2}(a)\}\subseteq[n+m]$, we have $\sum_{a\in[n]}\left|\{\varphi_{1}(a),\varphi_{2}(a)\}\right|\le n+m$,
so the number of these sets which have 2~elements is at most~$m$; in other words, 
\[
\left|\left\{ a\in[n]:\varphi_{1}(a)\ne\varphi_{2}(a)\right\} \right|\le m.
\]

Let 
\begin{align*}
D & =\left\{ a\in[n]:\varphi_{1}(a)=\varphi_{2}(a)\right\} ,\\
E & =[n]\setminus D,\\
X_{12} & =\varphi_{1}[D]=\varphi_{2}[D],\\
X_{1} & =\varphi_{1}[E],\\
X_{2} & =\varphi_{2}[E],\\
X_{\emptyset} & =[n+m]\setminus(X_{12}\cup X_{1}\cup X_{2}).
\end{align*}
 Then 
\begin{align}
[n+m] & =X_{12}\sqcup X_{1}\sqcup X_{2}\sqcup X_{\emptyset},\label{eq:disjoint}\\
\Im\varphi_{1} & =X_{12}\cup X_{1},\\
\Im\varphi_{2} & =X_{12}\cup X_{2},\nonumber \\
\left|X_{12}\right| & =\left|D\right|\ge n-m,\labelifpresent{grey}\label{eq:D ge n-m}\\
\left|X_{1}\right| & =\left|X_{2}\right|=\left|E\right|\le m,\nonumber \\
\left|X_{12}\right|+\left|X_{1}\right| & =\left|D\right|+\left|E\right|=n,\nonumber \\
\left|E\right|+\left|X_{\emptyset}\right| & =(n+m)-\left(\left|X_{12}\right|+\left|X_{1}\right|\right)=m.\label{eq:e-x0}
\end{align}

We have that, for every $A\subseteq[n]$, 
\begin{equation}
\forall a\in A:\varphi_{1}(a)\in\varphi(A)\label{eq:a in A}
\end{equation}
 because $a\in A\Rightarrow\{a\}\subseteq A\Rightarrow\{\varphi_{1}(a)\}=\varphi(\{a\})\subseteq\varphi(A)\Rightarrow\varphi_{1}(a)\in\varphi(A)$.
(Equivalently, $\varphi_{1}[A]\subseteq\varphi(A)$.) Symmetrically, 
\begin{equation}
\forall a\in[n]\setminus A:\varphi_{2}(a)\notin\varphi(A)\label{eq:a notin A}
\end{equation}
 because $a\in[n]\setminus A\Rightarrow A\subseteq[n]\setminus\{a\}\Rightarrow\varphi(A)\subseteq\varphi\left([n]\setminus\{a\}\right)=[n+m]\setminus\{\varphi_{2}(a)\}\Rightarrow\varphi_{2}(a)\notin\varphi(A)$.

For an $A\subseteq[n]$, let 
\[
F(A)=\left\{ \Padded{\begin{cases}
\varphi_{2}(a) & \tif a\in A\\
\varphi_{1}(a) & \tif a\notin A
\end{cases}:a\in E}\right\} \cup X_{\emptyset}.
\]
 By \eqref{eq:disjoint}, \eqref{eq:a in A} and \eqref{eq:a notin A}, we have 
\begin{equation}
\begin{aligned}\varphi(A)\cap\left([n+m]\setminus F(A)\right) & =\varphi(A)\cap\left(X_{12}\cup\left\{ \Padded{\begin{cases}
\varphi_{1}(a) & \tif a\in A\\
\varphi_{2}(a) & \tif a\notin A
\end{cases}:a\in E}\right\} \right)\\
 & =\varphi(A)\cap\left\{ \Padded{\begin{cases}
\varphi_{1}(a) & \tif a\in A\\
\varphi_{2}(a) & \tif a\notin A
\end{cases}:a\in[n]}\right\} \overset{\eqref{eq:a in A},\eqref{eq:a notin A}}{=}\varphi_{1}[A],
\end{aligned}
\label{eq:a-phi}
\end{equation}
 and therefore 
\begin{equation}
\left|\varphi(A)\cap\left([n+m]\setminus F(A)\right)\right|=\left|\varphi_{1}[A]\right|=\left|A\right|.\label{eq:a-phi-size}
\end{equation}
 Note that $\left|F(A)\right|\overset{\eqref{eq:e-x0}}{=}m$. The elements of $F(A)$ are the only elements of $[n+m]$ such
that \eqref{eq:a-phi} does not determine whether they are elements of $\varphi(A)$. In particular, 
\begin{equation}
F(A)\subseteq[n+m]\setminus\varphi_{1}[A].\label{eq:F}
\end{equation}
 From \eqref{eq:levels} and \eqref{eq:a-phi-size}, $\varphi(A)$ contains no element of $F(A)$ if $\left|A\right|\le k-1$,
exactly one if $k\le\left|A\right|\le k+1$, and all elements of $F(A)$ if $k+2\le\left|A\right|$. For $A\in\binom{[n]}{k}\cup\binom{[n]}{k+1}$,
let $f(A)$ be the single element of $\varphi(A)\cap F(A)$.
\begin{claim}
\label{claim:y}One of the following holds:

\gdef\labelwidthi{\widthof{\textbf{\textup{A0. }}}}
\gdef\labeli{\textbf{\textup{A\arabic{enumi}. }}}
\gdef\propertyref#1{A#1}
\gdef\refi{\propertyref{\arabic{enumi}}}
\begin{enumerate}[label=\labeli, ref=\refi, labelsep=0em, leftmargin=0em, labelwidth=\labelwidthi, itemindent=\labelwidth, align=left]
\item \label{enu:X0}There is a $y\in X_{\emptyset}\cup X_{2}$ such that, for every $A\in\binom{[n]}{k}$, we have $f(A)=y$.
(In fact in this case $y\in X_{\emptyset}$, since for a $y\in X_{2}$ and $A\in\binom{[n]\setminus\{\varphi_{2}^{-1}(y)\}}{k}$
we would have $y\notin F(A)$. We do not use this.)
\item \label{enu:X1}There is a $y\in X_{1}$ such that, for every $A\in\binom{[n]\setminus\{\varphi_{1}^{-1}(y)\}}{k}$, we have
$f(A)=y$. (Note that when $\varphi_{1}^{-1}(y)\notin A$, $y\in F(A)$ holds by the definition of $F(A)$.)
\end{enumerate}
\end{claim}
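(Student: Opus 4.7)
I would follow the Johnson-graph approach used in the proof of \prettyref{claim:y-1-1}, adapted to the fact that the set $F(A)$ now depends on $A$ through its intersection with $E$.

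The main tool is the following swap lemma: for $A, B \in \binom{[n]}{k}$ with $A \triangle B = \{a, b\}$, $a \in A \setminus B$, $b \in B \setminus A$, one has $f(A) = f(B)$ unless either $b \in E$ and $f(A) = \varphi_1(b)$, or $a \in E$ and $f(B) = \varphi_1(a)$. To see this, note that $|A \cup B| = k+1$, so by \eqref{eq:a-phi-size} the set $\varphi(A \cup B) \cap F(A \cup B)$ is a singleton $\{f(A \cup B)\}$. Unwinding the definition of $F$ shows that $F(A) \setminus F(A \cup B) = \{\varphi_1(b)\}$ when $b \in E$ (and is empty otherwise), and analogously for $F(B)$. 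Thus, if $f(A) \neq \varphi_1(b)$, then $f(A) \in F(A) \cap F(A \cup B)$ and also $f(A) \in \varphi(A) \subseteq \varphi(A \cup B)$, forcing $f(A) = f(A \cup B)$; symmetrically for $B$, giving $f(A) = f(B)$.

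Next, I would apply the swap lemma to the sub-Johnson graph on $\binom{D}{k}$. By the theorem's choice of $k$ we have $|D| = n - |E| \geq n - m > k$, so this graph is non-empty and connected. On it, every swap uses elements of $D$ only, so no bad case can arise and $f$ is constant there; call this common value $y$. Since $A \cap E = \emptyset$ for $A \in \binom{D}{k}$, one has $F(A) = X_\emptyset \cup X_1$, so $y \in X_\emptyset \cup X_1$, which corresponds to the two cases of the claim.

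Finally, I would extend the equality $f(B) = y$ by induction on $|B \cap E|$. For the inductive step, pick $e \in B \cap E$ and $d \in D \setminus B$, and let $A = (B \setminus \{e\}) \cup \{d\}$, so that $|A \cap E| = |B \cap E| - 1$ and $A$, $B$ are Johnson-adjacent. By the swap lemma, $f(B) = f(A) = y$ unless $f(A) = \varphi_1(e)$. If $y \in X_\emptyset$, this never occurs since $\varphi_1(e) \in X_1$, so $f(B) = y$ for every $B \in \binom{[n]}{k}$, establishing A1. If $y = \varphi_1(c) \in X_1$ for some $c \in E$, restricting the induction to sets $B \in \binom{[n] \setminus \{c\}}{k}$ ensures $c \notin B$ and hence $e \neq c$, so $\varphi_1(e) \neq \varphi_1(c) = y$, again giving $f(B) = y$ and establishing A2.

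The principal obstacle is the swap lemma itself: the potentially bad value $\varphi_1(b)$ is a genuine candidate for $f(A)$ whenever $b \in E \setminus A$, and cannot be ruled out in general. The argument succeeds only because the induction propagates a specific value $y$, and one checks case by case that $y$ is not of the form $\varphi_1(e)$ for the particular element $e$ being swapped.
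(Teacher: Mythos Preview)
Your swap lemma is exactly the paper's \prettyref{obs:neighbors}, and the overall plan coincides with the paper's ``simpler proof'' of the claim (which the paper presents only under the extra hypothesis $m \le n - \sqrt{8(n+m)-15}-1$). Your case split into $y\in X_\emptyset$ versus $y\in X_1$ is in fact cleaner than what the paper writes there.

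The gap is the assertion ``by the theorem's choice of $k$ we have $|D| \ge n-m > k$''. The proof of \prettyref{prop:noninduced-lower-general} only requires $\sqrt{8(n+m)-15}\le k\le n-1-\sqrt{8(n+m)-15}$; nothing forces $k<n-m$, and when $m$ is close to $n$ no such $k$ exists. For instance the diagonal case $m=n$ is permitted by \prettyref{thm:Lowerbound}, and then $n-m=0$. If $|D|<k$, the base family $\binom{D}{k}$ is empty and your induction on $|B\cap E|$ never starts; even the inductive step needs $D\setminus B\ne\emptyset$, which again relies on $|D|\ge k$. The paper flags exactly this: its simpler proof opens by \emph{assuming} $k\le n-m$, which is only possible under the extra hypothesis on $m$.

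For the general case the paper takes a different route. It transfers $f$ to a function $g(B)=f(\varphi_1^{-1}[B])$ on $\binom{X}{k}$ with $X=X_{12}\cup X_1$, and proves an abstract lemma (\prettyref{lem:y inductive}) by induction on the level $l=k$ rather than on $|B\cap E|$: fixing an element $a\in X$ and restricting to $l$-sets containing $a$ reduces to the $(l-1)$-case, and the resulting near-constant value is then extended to all of $\binom{X}{l}$ by a short case analysis. This argument uses only $1\le k\le n-3$, which holds for every $k$ in the admissible range, so no control on $|D|$ is needed.
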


First we show that \prettyref{prop:noninduced-lower-general} follows from this claim. We use the constant-weight code $\C\subset\B$
given by \prettyref{lem:modp code}. If \ref{enu:X0} holds in \prettyref{claim:y}, then we use the statement~\eqref{eq:modp code statement}
in \prettyref{lem:modp code} with the same $n$~and~$m$ as in \eqref{prop:noninduced-lower-general}, $X_{2}\cup X_{\emptyset}$
in the place of~$Y$, and $y$ as given by \prettyref{claim:y}. There is a set $C\in\binom{X_{12}\cup X_{1}}{k}$ such
that $C\cup\{y\}\in\C\subset\B$. Then $\varphi_{1}^{-1}[C]\in\Q_{n}$, and $\varphi\left(\varphi_{1}^{-1}[C]\right)=C\cup\{y\}\in\B$,
contradicting that the image of~$\varphi$ is in~$\R$. If \ref{enu:X1} holds in \prettyref{claim:y}, then we use the
statement~\eqref{eq:modp code statement} with $n-1$ in the place of $n$, $m+1$ in the place of $m$, $X_{2}\cup X_{\emptyset}\cup\{y\}$
in the place of $Y$, and $y$ as given by \prettyref{claim:y}. There is a set $C\in\binom{(X_{12}\cup X_{1})\setminus\{y\}}{k}$
such that $C\cup\{y\}\in\C\subset\B$. Then $\varphi_{1}^{-1}[C]\in\binom{[n]\setminus\{\varphi_{1}^{-1}(y)\}}{k}\subset\Q_{n}$,
and $\varphi\left(\varphi_{1}^{-1}[C]\right)=C\cup\{y\}\in\B$, contradicting that the image of~$\varphi$ is in~$\R$.

To prove \prettyref{claim:y}, we need the following.
\begin{claim}
\label{obs:neighbors}If $A,B\in\binom{[n]}{k}$ with a symmetric difference of size~$2$, and $f(A)\ne f(B)$, then at
least one of the following holds:
\begin{itemize}
\item $f(A)=\varphi_{1}(b)$ where $\{b\}=B\setminus A$. (This implies $b\in E$ and $f(A)\in X_{1}$.)
\item $f(B)=\varphi_{1}(a)$ where $\{a\}=A\setminus B$. (This implies $a\in E$ and $f(B)\in X_{1}$.)
\end{itemize}
\begin{proof}
Indeed, $\left|A\cup B\right|=k+1$, so 
\begin{equation}
\varphi(A\cup B)=\varphi_{1}[A\cup B]\cup\{f(A\cup B)\}=\varphi_{1}[A\cap B]\cup\{\varphi_{1}(a),\varphi_{1}(b),f(A\cup B)\}.\label{eq:a-b-phi}
\end{equation}
Furthermore, 
\begin{align}
\varphi(A\cup B) & \supset\varphi(A)=\varphi_{1}[A]\cup\{f(A)\}=\varphi_{1}[A\cap B]\cup\{\varphi_{1}(a),f(A)\}\tandfinal\label{eq:phi-a}\\
\varphi(A\cup B) & \supset\varphi(B)=\varphi_{1}[B]\cup\{f(B)\}=\varphi_{1}[A\cap B]\cup\{\varphi_{1}(b),f(B)\}.\label{eq:phi-b}
\end{align}
By \eqref{eq:a-b-phi}, \eqref{eq:phi-a} and~\eqref{eq:phi-b}, we have {\thickmuskip=5mu plus 3mu minus 3mu \medmuskip=2mu $\left|\{\varphi_{1}(a),f(A),\varphi_{1}(b),f(B)\}\right|\le\left|\{\varphi_{1}(a),\varphi_{1}(b),f(A\cup B)\}\right|=\nobreak3$},
therefore the elements on the left-hand side of the inequality are not distinct. We know that $\varphi_{1}$ is an injection,
and by~\eqref{eq:F}, $f(S)\notin\varphi_{1}[S]$ for any $S$ of size $k$ or $k+1$. We have assumed $f(A)\ne f(B)$.
It follows that $f(A)=\varphi_{1}(b)$ or $f(B)=\varphi_{1}(a)$. This completes the proof of \prettyref{obs:neighbors}.
\end{proof}
\end{claim}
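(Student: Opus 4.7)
The plan is to leverage the inclusions $\varphi(A)\subseteq\varphi(A\cup B)$ and $\varphi(B)\subseteq\varphi(A\cup B)$, combined with the tight description of each $\varphi(\cdot)$ from \eqref{eq:a-phi} and \eqref{eq:a-phi-size} as a $\varphi_{1}$-image plus a single extra element, to pin the values $f(A)$ and $f(B)$ down to very small candidate sets, from which the dichotomy falls out.

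First I would unpack the combinatorics of the triple $A$, $B$, $A\cup B$: writing $\{a\}=A\setminus B$ and $\{b\}=B\setminus A$, we have $|A\cap B|=k-1$ and $|A\cup B|=k+1$. Each of $|A|$, $|B|$, $|A\cup B|$ lies in $\{k,k+1\}$, so \eqref{eq:levels} yields $|\varphi(\cdot)|=|\cdot|+1$ in all three cases, and \eqref{eq:a-phi}--\eqref{eq:a-phi-size} say that $\varphi(\cdot)\cap([n+m]\setminus F(\cdot))=\varphi_{1}[\cdot]$ has full size $|\cdot|$, so the leftover element is the unique $f(\cdot)$. This gives
\[
\varphi(A)=\varphi_{1}[A\cap B]\cup\{\varphi_{1}(a),f(A)\},\quad \varphi(B)=\varphi_{1}[A\cap B]\cup\{\varphi_{1}(b),f(B)\},
\]
and $\varphi(A\cup B)=\varphi_{1}[A\cap B]\cup\{\varphi_{1}(a),\varphi_{1}(b),f(A\cup B)\}$.

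Next, I would exploit $\varphi(A)\subseteq\varphi(A\cup B)$: since $f(A)$ is the single element of $\varphi(A)$ beyond $\varphi_{1}[A\cap B]\cup\{\varphi_{1}(a)\}$, it must lie in $\{\varphi_{1}(a),\varphi_{1}(b),f(A\cup B)\}$. Property~\eqref{eq:F} gives $f(A)\notin\varphi_{1}[A]$, so $f(A)\ne\varphi_{1}(a)$, leaving $f(A)\in\{\varphi_{1}(b),f(A\cup B)\}$. By the symmetric argument applied to $\varphi(B)\subseteq\varphi(A\cup B)$, I get $f(B)\in\{\varphi_{1}(a),f(A\cup B)\}$. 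The hypothesis $f(A)\ne f(B)$ then forbids the ``both equal $f(A\cup B)$'' case, so at least one of $f(A)=\varphi_{1}(b)$ or $f(B)=\varphi_{1}(a)$ must hold, which is the claimed dichotomy.

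The parenthetical refinements ($b\in E$ and $f(A)\in X_{1}$ in the first case, and symmetrically in the second) come for free: $F(A)$ is by definition contained in $X_{1}\cup X_{2}\cup X_{\emptyset}$, so $\varphi_{1}(b)\in F(A)$ forces $\varphi_{1}(b)\notin X_{12}=\varphi_{1}[D]$, hence $b\in E$ and $\varphi_{1}(b)\in X_{1}$. The only mild obstacle here is the initial bookkeeping to recognize $\varphi(A)$, $\varphi(B)$, and $\varphi(A\cup B)$ in the ``$\varphi_{1}$-image plus one extra element'' form simultaneously; once that is set up, the inclusion chain does all the work and no case analysis on membership in the various $X_{\bullet}$ pieces is required.
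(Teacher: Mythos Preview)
Your proposal is correct and follows essentially the same route as the paper: both arguments write $\varphi(A)$, $\varphi(B)$, and $\varphi(A\cup B)$ in the form ``$\varphi_{1}$-image plus one extra element'', use the inclusions $\varphi(A),\varphi(B)\subseteq\varphi(A\cup B)$ together with \eqref{eq:F} to constrain $f(A)$ and $f(B)$, and then invoke $f(A)\ne f(B)$ to force the dichotomy. The only cosmetic difference is that the paper phrases the constraint via a cardinality bound $\left|\{\varphi_{1}(a),f(A),\varphi_{1}(b),f(B)\}\right|\le 3$ and then eliminates coincidences, whereas you track the inclusions for $f(A)$ and $f(B)$ separately; the content is the same.
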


\present{grey}We first prove \prettyref{claim:y} under the condition $m\le n-\sqrt{8(n+m)-15}-1$, as the proof is simpler
than the proof for arbitrary $m$\emph{.} (For large $n$, this condition holds whenever the ratio of $m$ and $n$ is not
very close to 1.)
\begin{boldproof}[Proof of \prettyref{claim:y} when \thinmuskip=2mu \medmuskip=3mu \thickmuskip=4mu $m\le n-\sqrt{8(n+m)-15}-1$]
At the beginning of the proof of \prettyref{prop:noninduced-lower-general}, we chose an arbitrary $k$ between {\medmuskip=3mu plus 3mu minus 3mu $\sqrt{8(n+m)-15}$
and $n-1-\sqrt{8(n+m)-15}$}. Now we will assume that $k\le n-m$; this is satisfied by choosing e.g. $k=\sqrt{8(n+m)-15}$.
Then, by~\eqref{eq:D ge n-m}, there exists an $B\in\binom{[n]}{k}$ such that $B\subseteq D$. We show that \prettyref{claim:y}
holds with $y=f(B)$. In fact, since $B\cap E=\emptyset$, we have $f(B)\in F(B)=X_{\emptyset}$, and we show that $f(A)=f(B)$
for every $A\in\binom{[n]}{k}$.

Take an $A\in\binom{[n]}{k}$. We can get from $B$ to $A$ by replacing one element at a time, in such a way that we never
add an element that is not an element of $A$, and we never remove an element of $A$ (whether it is also an element of $B$,
or we have added it). In particular, we never remove an element of $E$. That is, there is a sequence $B=B_{0},B_{1},\ldots,B_{l}=A$
such that $\left|B_{i}\triangle B_{i+1}\right|=2$ and $b_{i}^{\leftarrow}\in D$ where $\{b_{i}^{\leftarrow}\}=B_{i}\setminus B_{i+1}$.
Let $b_{i+1}^{\rightarrow}\in[n]$ such that $\{b_{i+1}^{\rightarrow}\}=B_{i+1}\setminus B_{i}$.

We show by induction that $f(B_{i})=f(B)$ for every $i=0,\ldots,l$. Assume that $f(B)=f(B_{i})\ne f(B_{i+1})$. By \prettyref{obs:neighbors},
either $f(B_{i})=\varphi_{1}(b_{i+1}^{\rightarrow})$ or $f(B_{i+1})=\varphi_{1}(b_{i}^{\leftarrow})$. The former implies
$f(B_{i})=f(B)\in X_{1}$, contradicting that it is in $X_{\emptyset}$. The latter implies $b_{i}^{\leftarrow}\in E$, contradicting
that it is in $D$.
\end{boldproof}
\begin{boldproof}[Proof of \prettyref{claim:y}]
The general form of \prettyref{claim:y} will be a consequence of the following lemma.
\begin{lem}
\label{lem:y inductive}Let $n,l\in\naturals$ such that $n\ge5$ and $1\le l\le n-3$, and let $X$ and $Y$ be disjoint
sets such that $\left|X\right|=n$. Let $g:\binom{X}{l}\rightarrow X\cup Y$ be a function such that for every $A\in\binom{X}{l}$,
$g(A)\notin A$; and for every $A,B\in\binom{X}{l}$ with a symmetric difference of size~$2$, where $g(A)\ne g(B)$, at
least one of $\{g(A)\}=B\setminus A$ and $\{g(B)\}=A\setminus B$ holds. Then there is a $y\in X\cup Y$ such that $g(A)=y$
for every $A\in\binom{X\setminus\{y\}}{l}$.
\end{lem}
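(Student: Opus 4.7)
The plan is to proceed by induction on $l$. For the base case $l = 1$, every two distinct singletons are adjacent in the Johnson graph, so the hypothesis reads: for any $x \ne x'$, either $g(\{x\}) = g(\{x'\})$, or $g(\{x\}) = x'$, or $g(\{x'\}) = x$. A short case analysis shows that $g$ cannot take three distinct values: three witnesses $x_u, x_v, x_w$ for three values would be forced into a ``3-cycle'' configuration $g(\{x_u\}) = x_v$, $g(\{x_v\}) = x_w$, $g(\{x_w\}) = x_u$ (up to reversal), but then any fourth singleton $\{x_4\}$ faces three pairwise constraints whose allowed intersection for $g(\{x_4\})$ is empty, contradicting $n \ge 4$. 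Hence $g$ takes at most two values, and when it takes two, a similar case analysis forces one of them to be dominant, yielding the required $y$.

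For the inductive step $l \ge 2$, I would fix any $x \in X$ and consider the slice $g_x \colon \binom{X \setminus \{x\}}{l - 1} \to (X \setminus \{x\}) \cup (Y \cup \{x\})$ defined by $g_x(A) := g(A \cup \{x\})$. The function $g_x$ inherits the lemma's hypotheses with parameters $(n - 1, l - 1)$: the constraint $g_x(A) \notin A$ follows from $g(A \cup \{x\}) \notin A \cup \{x\}$, and adjacency in $\binom{X \setminus \{x\}}{l - 1}$ lifts to adjacency in $\binom{X}{l}$ under $A \mapsto A \cup \{x\}$. Applying the inductive hypothesis to $g_x$ yields a dominant value $y_x$; one checks $y_x \ne x$ (otherwise $g$ would be forced to take the value $x$ on a set containing $x$), whence $y_x \in (X \setminus \{x\}) \cup Y$ and $g(B) = y_x$ for every $B \in \binom{X}{l}$ with $x \in B$ and $y_x \notin B$.

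To synthesize a global $y$, for any two $x_1, x_2 \in X$ with $y_{x_1} \ne x_2$ and $y_{x_2} \ne x_1$, the hypothesis $l \le n - 3$ ensures the existence of $B \in \binom{X}{l}$ containing $\{x_1, x_2\}$ and disjoint from $\{y_{x_1}, y_{x_2}\} \cap X$ (we need $l - 2$ extra elements from $X \setminus (\{x_1, x_2\} \cup \{y_{x_1}, y_{x_2}\})$, a set of size $\ge n - 4 \ge l - 2$); the two dominances applied at $B$ then force $y_{x_1} = g(B) = y_{x_2}$. The degenerate case $y_{x_1} = x_2$ is addressed by introducing a third element $x_3 \in X \setminus \{x_1, x_2\}$ and applying the non-degenerate argument to $(x_1, x_3)$ and $(x_2, x_3)$; a careful case analysis shows that a single global $y \in \{x_1, x_2\}$ emerges (for instance, $y = x_2$ arises when all $y_{x_3}$ equal $x_2$, while $y = x_1$ arises when they all equal $x_1$; a mix is ruled out by the non-degenerate argument applied within $X \setminus \{x_1, x_2\}$). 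Once $y$ is identified, the conclusion follows: for any $B \in \binom{X \setminus \{y\}}{l}$, pick any $x \in B$ (possible since $l \ge 1$), and $x$'s dominance (with $x \in B$ and $y_x = y \notin B$) gives $g(B) = y$.

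The hard part will be the synthesizing step, specifically navigating the degenerate configurations where some $y_{x_1}$ coincides with some $x_2$: multiple sub-cases must be reconciled, and the hypothesis $l \le n - 3$ provides precisely the room in $X$ needed to propagate consistency via auxiliary elements. A secondary technicality concerns small-$n$ boundary cases, where the inductive hypothesis at parameters $(n-1, l-1)$ would lie just outside the stated regime $n \ge 5$; this is accommodated by noting that the base case analysis in fact works for $n \ge 4$, so the induction closes cleanly in the relevant range $l \le n - 3$.
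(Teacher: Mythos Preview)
Your argument is correct, but the inductive step is organised differently from the paper's.

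The paper applies the inductive hypothesis only \emph{once}. It first disposes of the case where $g$ is constant, and otherwise picks a single $a\in X$ for which $g$ is non-constant on the $l$-sets containing $a$; inducting on that one slice yields an element $b$, and non-constancy forces $b\in X$. The remainder is a direct neighbour argument: every $C\in\binom{X\setminus\{a,b\}}{l}$ has $g(C)\in\{a,b\}$, these values all agree, and if they equal $a$ one uses two auxiliary elements (this is precisely where $l\le n-3$ enters) to pin down $g$ on sets containing $b$ but not $a$.

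You instead slice at \emph{every} $x\in X$, obtaining a family $(y_x)_{x\in X}$, and then run a consistency argument on that family. The non-degenerate pairwise comparison is clean, and your degenerate case $y_{x_1}=x_2$ can indeed be completed as you sketch: comparing $(x_1,x_3)$ gives $y_{x_3}\in\{x_1,x_2\}$ for every $x_3\notin\{x_1,x_2\}$; the non-degenerate comparison within $X\setminus\{x_1,x_2\}$ then makes all these $y_{x_3}$ equal, and if they equal $x_1$ one recovers $y_{x_2}=x_1$ by comparing $x_2$ against any single $x_3$ (here one uses $n-2\ge 2$, so $y_{x_2}$ cannot equal every $x_3$). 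Both routes work; the paper's is more economical---a single induction call and no global synthesis---while yours is more uniform once the machinery is in place. Your remark that the base case in fact goes through for $n\ge 4$ is needed in both proofs to close the induction at $(n-1,l-1)$ when $n=5$ and $l=2$.
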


\begin{subproof}
We prove \prettyref{lem:y inductive} by induction on~$l$.

If $l=1$, the sets are singletons, and every symmetric difference is of size~$2$. We define a graph on~$X$: we connect
two elements $a$ and~$b$ if $g(\{a\})\ne g(\{b\})$. This graph is the complement of a graph whose components are complete
graphs (with the components defined by the values of $a\mapsto g(\{a\})$). For every $a,b\in X$ such that $ab$ is an edge,
we have $g(\{a\})=b$ or $g(\{b\})=a$. Direct the graph such that we have the directed edge $(a,b)$ when $g(\{a\})=b$
(we may direct some edges in both directions).

The out-degree of every vertex is at most~1. Thus the number of edges is at most~$n$. By our assumptions $n\ge4$; the
only graphs with these properties on at least~5 vertices (ignoring the directions of the edges) are the empty graph and
a star on $n$ vertices. If it is an empty graph, then $g(\{a\})$ is the same for every $a\in X$ (and it is necessarily
in~$Y$); the statement of \prettyref{lem:y inductive} holds with $y=g(\{a\})$. If the graph is a star on $n$ vertices,
let $a$ be the center. Since at most one edge is directed outward from~$a$, all but at most one edge is directed towards~$a$.
That is, $g(\{b\})=a$ for all but at most one $b\in X\setminus\{a\}$. Since the leaves of the star are not connected, $g$~has
the same values on them as singletons, so in fact $g(\{b\})=a$ for every $b\in X\setminus\{a\}$, and the lemma holds with
$y=a$.

Now let $l\ge2$. If $g(A)$ is the same for every $A\in\binom{X}{l}$, the lemma holds with that value as~$y$. Assume
that $g(A)$ is not the same for every $A\in\binom{X}{l}$. Since the Johnson graph is connected, there are sets with a symmetric
difference of size~$2$ with different~$g$; consequently there is an~$a\in X$ such that there are sets containing~$a$
with different~$g$.

We use the induction hypothesis with $\tilde{l}=l-1$, $\tilde{n}=n-1$, $\tilde{X}=X\setminus\{a\}$, $\tilde{g}\bigl(\tilde{A}\bigr)=g\bigl(\{a\}\cup\tilde{A}\bigr)$
for $\tilde{A}\in\binom{\tilde{X}}{l-1}$, and $Y$~unchanged. Note that since $g\bigl(\{a\}\cup\tilde{A}\bigr)\notin\{a\}\cup\tilde{A}$,
in fact $\tilde{g}\bigl(\tilde{A}\bigr)\in\tilde{X}\cup Y$ and $\tilde{g}\bigl(\tilde{A}\bigr)\notin\tilde{A}$, so the
conditions of the induction hypothesis hold. So there is a $b\in\tilde{X}\cup Y$ such that $g\bigl(\tilde{A}\bigr)=b$ for
every $\tilde{A}\in\binom{\tilde{X}\setminus\{b\}}{l}$; equivalently, $b\in(X\cup Y)\setminus\{a\}$ such that $g(A)=b$
for every $A\in\binom{X}{l}$ that contains $a$ but not~$b$. If $b$ were in~$Y$, then $g(A)$ would be the same for
every $A\in\binom{X}{l}$ that contains $a$, contradicting our assumption. So $b\in X$. 
\begin{figure}
\centering{}\sidebyside{\sidebysidebox{\begin{center}
\begin{tikzpicture}[line cap=round,line join=round,>=triangle 45,x=1.0cm,y=1.0cm]
\draw [line width=0.8pt] (0.,0.)-- (2.,0.);
\draw [line width=0.8pt] (2.,0.)-- (2.618033988749895,1.9021130325903064);
\draw [line width=0.8pt] (2.618033988749895,1.9021130325903064)-- (1.,3.077683537175253);
\draw [line width=0.8pt] (1.,3.077683537175253)-- (-0.6180339887498947,1.9021130325903073);
\draw [line width=0.8pt] (-0.6180339887498947,1.9021130325903073)-- (0.,0.);
\draw [line width=0.8pt] (0.,0.)-- (2.618033988749895,1.9021130325903064);
\draw [line width=0.8pt] (2.618033988749895,1.9021130325903064)-- (-0.6180339887498947,1.9021130325903073);
\draw [line width=0.8pt] (-0.6180339887498947,1.9021130325903073)-- (2.,0.);
\draw [line width=0.8pt] (2.,0.)-- (1.,3.077683537175253);
\draw [line width=0.8pt] (1.,3.077683537175253)-- (0.,0.);
\begin{scriptsize}
\draw [fill=black] (0.,0.) circle (2.5pt);
\draw[color=black] (0.08,-0.25) node {$a$};
\draw [fill=black] (2.,0.) circle (2.5pt);
\draw[color=black] (2.18,-0.21) node {$b$};
\draw [fill=black] (2.618033988749895,1.9021130325903064) circle (2.5pt);
\draw[color=black] (2.76,2.15) node {$c$};
\draw [fill=black] (1.,3.077683537175253) circle (2.5pt);
\draw[color=black] (1.1,3.39) node {$d$};
\draw [fill=black] (-0.6180339887498947,1.9021130325903073) circle (2.5pt);
\draw[color=black] (-0.84,2.19) node {$e$};
\draw[color=black] (-0.5,0.81) node {$b$};
\draw[color=black] (1.22,1.11) node {$b$};
\draw[color=black] (0.66,1.47) node {$b$};
\end{scriptsize}
\end{tikzpicture}\caption{\label{fig:g(au)=00003Db}$n=5$, $l=2$. The label on an edge $uv$ is $g(\{u,v\})$. (The elements of $Y$ are not shown
in the figure.)}
\par\end{center}}\sidebysidebox{\begin{center}
\begin{tikzpicture}[line cap=round,line join=round,>=triangle 45,x=1.0cm,y=1.0cm]
\draw [line width=0.8pt] (0.,0.)-- (2.,0.);
\draw [line width=0.8pt] (2.,0.)-- (2.618033988749895,1.9021130325903064);
\draw [line width=0.8pt] (2.618033988749895,1.9021130325903064)-- (1.,3.077683537175253);
\draw [line width=0.8pt] (1.,3.077683537175253)-- (-0.6180339887498947,1.9021130325903073);
\draw [line width=0.8pt] (-0.6180339887498947,1.9021130325903073)-- (0.,0.);
\draw [line width=0.8pt] (0.,0.)-- (2.618033988749895,1.9021130325903064);
\draw [line width=0.8pt] (2.618033988749895,1.9021130325903064)-- (-0.6180339887498947,1.9021130325903073);
\draw [line width=0.8pt] (-0.6180339887498947,1.9021130325903073)-- (2.,0.);
\draw [line width=0.8pt] (2.,0.)-- (1.,3.077683537175253);
\draw [line width=0.8pt] (1.,3.077683537175253)-- (0.,0.);
\begin{scriptsize}
\draw [fill=black] (0.,0.) circle (2.5pt);
\draw[color=black] (0.08,-0.25) node {$a$};
\draw [fill=black] (2.,0.) circle (2.5pt);
\draw[color=black] (2.18,-0.21) node {$b$};
\draw [fill=black] (2.618033988749895,1.9021130325903064) circle (2.5pt);
\draw[color=black] (2.76,2.15) node {$c$};
\draw [fill=black] (1.,3.077683537175253) circle (2.5pt);
\draw[color=black] (1.1,3.39) node {$d$};
\draw [fill=black] (-0.6180339887498947,1.9021130325903073) circle (2.5pt);
\draw[color=black] (-0.84,2.19) node {$e$};
\draw[color=black] (1.94,2.61) node {$\bullet$};
\draw[color=black] (0.12,2.61) node {$\bullet$};
\draw[color=black] (-0.5,0.81) node {$b$};
\draw[color=black] (1.22,1.11) node {$b$};
\draw[color=black] (1.02,1.76) node {$\bullet$};
\draw[color=black] (0.66,1.47) node {$b$};
\end{scriptsize}
\end{tikzpicture}\caption{\label{fig:bullets}We first prove that the marked pairs are assigned either $a$ or $b$, then that they are all assigned
the same value.}
\par\end{center}}}
\end{figure}
(See \prettyref{fig:g(au)=00003Db}.)

Take a $C\in\binom{X\setminus\{a,b\}}{l}$. We show that $g(C)\in\{a,b\}$. Take an arbitrary $c\in C$. $(C\setminus\{c\})\cup\{a\}$
contains $a$ but not $b$, so $g\bigl((C\setminus\{c\})\cup\{a\}\bigr)=b$. Since $\left|C\triangle\bigl((C\setminus\{c\})\cup\{a\}\bigr)\right|=2$,
either $g(C)=g\bigl((C\setminus\{c\})\cup\{a\}\bigr)=b$, or $g(C)=a$, or $g\bigl((C\setminus\{c\})\cup\{a\}\bigr)=c$ ---
but the last option is false.

Now we show that $g(C)$ is the same for every $C\in\binom{X\setminus\{a,b\}}{l}$. (See \prettyref{fig:bullets}.) If this
is not the case, there are $C,D\in\binom{X\setminus\{a,b\}}{l}$ such that $\left|C\triangle D\right|=2$, and $g(C)=a$
but $g(D)=b$. But this implies that $C\setminus D=\{b\}$ or $D\setminus C=\{a\}$, which is impossible because $a,b\notin C,D$.

We already know that $g(A)=b$ for every $A\in\binom{X}{l}$ that contains $a$ but not~$b$. If $g(C)=b$ for every $C\in\binom{X\setminus\{a,b\}}{l}$,
then the lemma holds with $y=b$ (see \prettyref{fig:y=00003Db}). So assume instead that $g(C)=a$ for every $C\in\binom{X\setminus\{a,b\}}{l}$
(\prettyref{fig:assumption}).
\begin{figure}
\sidebyside{\sidebysidebox{\begin{center}
\begin{tikzpicture}[line cap=round,line join=round,>=triangle 45,x=1.0cm,y=1.0cm]
\draw [line width=0.8pt] (0.,0.)-- (2.,0.);
\draw [line width=0.8pt] (2.,0.)-- (2.618033988749895,1.9021130325903064);
\draw [line width=1.6pt] (2.618033988749895,1.9021130325903064)-- (1.,3.077683537175253);
\draw [line width=1.6pt] (1.,3.077683537175253)-- (-0.6180339887498947,1.9021130325903073);
\draw [line width=1.6pt] (-0.6180339887498947,1.9021130325903073)-- (0.,0.);
\draw [line width=1.6pt] (0.,0.)-- (2.618033988749895,1.9021130325903064);
\draw [line width=1.6pt] (2.618033988749895,1.9021130325903064)-- (-0.6180339887498947,1.9021130325903073);
\draw [line width=0.8pt] (-0.6180339887498947,1.9021130325903073)-- (2.,0.);
\draw [line width=0.8pt] (2.,0.)-- (1.,3.077683537175253);
\draw [line width=1.6pt] (1.,3.077683537175253)-- (0.,0.);
\begin{scriptsize}
\draw [fill=black] (0.,0.) circle (2.5pt);
\draw[color=black] (0.08,-0.25) node {$a$};
\draw [fill=black] (2.,0.) circle (2.5pt);
\draw[color=black] (2.18,-0.21) node {$b$};
\draw [fill=black] (2.618033988749895,1.9021130325903064) circle (2.5pt);
\draw[color=black] (2.76,2.15) node {$c$};
\draw [fill=black] (1.,3.077683537175253) circle (2.5pt);
\draw[color=black] (1.1,3.39) node {$d$};
\draw [fill=black] (-0.6180339887498947,1.9021130325903073) circle (2.5pt);
\draw[color=black] (-0.84,2.19) node {$e$};
\draw[color=black] (2,2.59) node {$\boldsymbol{b}$};
\draw[color=black] (-0.04,2.59) node {$\boldsymbol{b}$};
\draw[color=black] (-0.5,0.81) node {$\boldsymbol{b}$};
\draw[color=black] (1.22,1.11) node {$\boldsymbol{b}$};
\draw[color=black] (0.98,1.71) node {$\boldsymbol{b}$};
\draw[color=black] (0.66,1.47) node {$\boldsymbol{b}$};
\end{scriptsize}
\end{tikzpicture}
\par\end{center}
\begin{center}
\caption{\label{fig:y=00003Db}In this case, \prettyref{lem:y inductive} holds with $y=b$.}
\par\end{center}}\sidebysidebox{\begin{center}
\begin{tikzpicture}[line cap=round,line join=round,>=triangle 45,x=1.0cm,y=1.0cm]
\draw [line width=0.8pt] (0.,0.)-- (2.,0.);
\draw [line width=0.8pt] (2.,0.)-- (2.618033988749895,1.9021130325903064);
\draw [line width=0.8pt] (2.618033988749895,1.9021130325903064)-- (1.,3.077683537175253);
\draw [line width=0.8pt] (1.,3.077683537175253)-- (-0.6180339887498947,1.9021130325903073);
\draw [line width=0.8pt] (-0.6180339887498947,1.9021130325903073)-- (0.,0.);
\draw [line width=0.8pt] (0.,0.)-- (2.618033988749895,1.9021130325903064);
\draw [line width=0.8pt] (2.618033988749895,1.9021130325903064)-- (-0.6180339887498947,1.9021130325903073);
\draw [line width=0.8pt] (-0.6180339887498947,1.9021130325903073)-- (2.,0.);
\draw [line width=0.8pt] (2.,0.)-- (1.,3.077683537175253);
\draw [line width=0.8pt] (1.,3.077683537175253)-- (0.,0.);
\begin{scriptsize}
\draw [fill=black] (0.,0.) circle (2.5pt);
\draw[color=black] (0.08,-0.25) node {$a$};
\draw [fill=black] (2.,0.) circle (2.5pt);
\draw[color=black] (2.18,-0.21) node {$b$};
\draw [fill=black] (2.618033988749895,1.9021130325903064) circle (2.5pt);
\draw[color=black] (2.76,2.15) node {$c$};
\draw [fill=black] (1.,3.077683537175253) circle (2.5pt);
\draw[color=black] (1.1,3.39) node {$d$};
\draw [fill=black] (-0.6180339887498947,1.9021130325903073) circle (2.5pt);
\draw[color=black] (-0.84,2.19) node {$e$};
\draw[color=black] (2,2.59) node {$a$};
\draw[color=black] (-0.04,2.59) node {$a$};
\draw[color=black] (-0.5,0.81) node {$b$};
\draw[color=black] (1.22,1.11) node {$b$};
\draw[color=black] (0.98,1.71) node {$a$};
\draw[color=black] (0.66,1.47) node {$b$};
\end{scriptsize}
\end{tikzpicture}
\par\end{center}
\begin{center}
\caption{\label{fig:assumption}We assume that the pairs marked in \prettyref{fig:bullets} are assigned $a$ instead.}
\par\end{center}}}
\end{figure}

Let $B\in\binom{X}{l}$ such that it contains $b$ but not~$a$. We show that $g(B)=a$. Take two different, arbitrary elements
$c,d\in X\setminus(B\cup\{a\})$.
\begin{figure}
\sidebyside{\sidebysidebox{\begin{center}
\begin{tikzpicture}[line cap=round,line join=round,>=triangle 45,x=1.0cm,y=1.0cm]
\draw [line width=0.8pt] (0.,0.)-- (2.,0.);
\draw [line width=0.8pt] (2.,0.)-- (2.618033988749895,1.9021130325903064);
\draw [line width=0.8pt] (2.618033988749895,1.9021130325903064)-- (1.,3.077683537175253);
\draw [line width=0.8pt] (1.,3.077683537175253)-- (-0.6180339887498947,1.9021130325903073);
\draw [line width=0.8pt] (-0.6180339887498947,1.9021130325903073)-- (0.,0.);
\draw [line width=0.8pt] (0.,0.)-- (2.618033988749895,1.9021130325903064);
\draw [line width=0.8pt] (2.618033988749895,1.9021130325903064)-- (-0.6180339887498947,1.9021130325903073);
\draw [line width=0.8pt] (-0.6180339887498947,1.9021130325903073)-- (2.,0.);
\draw [line width=0.8pt] (2.,0.)-- (1.,3.077683537175253);
\draw [line width=0.8pt] (1.,3.077683537175253)-- (0.,0.);
\begin{scriptsize}
\draw [fill=black] (0.,0.) circle (2.5pt);
\draw[color=black] (0.08,-0.25) node {$a$};
\draw [fill=black] (2.,0.) circle (2.5pt);
\draw[color=black] (2.18,-0.21) node {$b$};
\draw [fill=black] (2.618033988749895,1.9021130325903064) circle (2.5pt);
\draw[color=black] (2.76,2.15) node {$c$};
\draw [fill=black] (1.,3.077683537175253) circle (2.5pt);
\draw[color=black] (1.1,3.39) node {$d$};
\draw [fill=black] (-0.6180339887498947,1.9021130325903073) circle (2.5pt);
\draw[color=black] (-0.84,2.19) node {$e$};
\draw[color=black] (2,2.59) node {$a$};
\draw[color=black] (-0.04,2.59) node {$a$};
\draw[color=black] (-0.5,0.81) node {$b$};
\draw[color=black] (1.22,1.11) node {$b$};
\draw[color=black] (0.98,1.71) node {$a$};
\draw[color=black] (0.8,1.07) node {$\bullet$};
\draw[color=gray] (1.32,1.53) node {$\bullet$};
\draw[color=gray] (2.46,0.86) node {$\bullet$};
\draw[color=black] (0.66,1.47) node {$b$};
\end{scriptsize}
\end{tikzpicture}\caption{\label{fig:g(B)=00003Da}We show that \thickmuskip=3mu plus 5mu \relax $g(B)=a$ for $B=\nobreak\{b,e\}$, for arbitrary
choice of $e\protect\ne a,b$.}
\par\end{center}}\sidebysidebox{\begin{center}
\begin{tikzpicture}[line cap=round,line join=round,>=triangle 45,x=1.0cm,y=1.0cm]
\draw [line width=0.8pt] (0.,0.)-- (2.,0.);
\draw [line width=1.6pt] (2.,0.)-- (2.618033988749895,1.9021130325903064);
\draw [line width=1.6pt] (2.618033988749895,1.9021130325903064)-- (1.,3.077683537175253);
\draw [line width=1.6pt] (1.,3.077683537175253)-- (-0.6180339887498947,1.9021130325903073);
\draw [line width=0.8pt] (-0.6180339887498947,1.9021130325903073)-- (0.,0.);
\draw [line width=0.8pt] (0.,0.)-- (2.618033988749895,1.9021130325903064);
\draw [line width=1.6pt] (2.618033988749895,1.9021130325903064)-- (-0.6180339887498947,1.9021130325903073);
\draw [line width=1.6pt] (-0.6180339887498947,1.9021130325903073)-- (2.,0.);
\draw [line width=1.6pt] (2.,0.)-- (1.,3.077683537175253);
\draw [line width=0.8pt] (1.,3.077683537175253)-- (0.,0.);
\begin{scriptsize}
\draw [fill=black] (0.,0.) circle (2.5pt);
\draw[color=black] (0.08,-0.25) node {$a$};
\draw [fill=black] (2.,0.) circle (2.5pt);
\draw[color=black] (2.18,-0.21) node {$b$};
\draw [fill=black] (2.618033988749895,1.9021130325903064) circle (2.5pt);
\draw[color=black] (2.76,2.15) node {$c$};
\draw [fill=black] (1.,3.077683537175253) circle (2.5pt);
\draw[color=black] (1.1,3.39) node {$d$};
\draw [fill=black] (-0.6180339887498947,1.9021130325903073) circle (2.5pt);
\draw[color=black] (-0.84,2.19) node {$e$};
\draw[color=black] (2.52,0.81) node {$\boldsymbol{a}$};
\draw[color=black] (2,2.59) node {$\boldsymbol{a}$};
\draw[color=black] (-0.04,2.59) node {$\boldsymbol{a}$};
\draw[color=black] (-0.5,0.81) node {$b$};
\draw[color=black] (1.22,1.11) node {$b$};
\draw[color=black] (0.98,1.71) node {$\boldsymbol{a}$};
\draw[color=black] (0.76,1.09) node {$\boldsymbol{a}$};
\draw[color=black] (1.32,1.53) node {$\boldsymbol{a}$};
\draw[color=black] (0.66,1.47) node {$b$};
\end{scriptsize}
\end{tikzpicture}\caption{\label{fig:y=00003Da}\prettyref{lem:y inductive} holds with $y=a$.}
\par\end{center}}}
\end{figure}
 (There are at least two such elements because $l\le n-3$. See \prettyref{fig:g(B)=00003Da}.) Since $\left|B\triangle\bigl((B\setminus\{b\})\cup\{c\}\bigr)\right|=2$,
either $g(B)=g\bigl((B\setminus\{b\})\cup\{c\}\bigr)=a$, or $g(B)=c$, or $g\bigl((B\setminus\{b\})\cup\{c\}\bigr)=b$ ---
but the last option is false. So if $g(B)\ne a$, then $g(B)=c$. By the same reasoning applied with $d$ in the place of
$c$, if $g(B)\ne a$, then $g(B)=d$, a contradiction. So $g(B)=a$ for every $B\in\binom{X}{l}$ that contains $b$ but
not~$a$. Since we already know that $g(C)=a$ for every $C\in\binom{X\setminus\{a,b\}}{l}$, this implies that the lemma
holds with $y=a$ (see \prettyref{fig:y=00003Da}). This completes the proof of \prettyref{lem:y inductive}.
\end{subproof}
Using \prettyref{lem:y inductive}, we show \prettyref{claim:y}. Let $l=k$, $X=X_{12}\cup X_{1}$, $Y=X_{\emptyset}\cup X_{2}$,
and for a $B\in\binom{X_{12}\cup X_{1}}{k}$, let $g(B)=f\left(\varphi_{1}^{-1}[B]\right)$. (Since $\varphi_{1}$~is an
injection and its image is $X_{12}\cup X_{1}$, we have $\varphi_{1}^{-1}[B]\in\binom{[n]}{k}=\Dom f$.) The conditions of
\prettyref{lem:y inductive} hold by \prettyref{obs:neighbors}. By \prettyref{lem:y inductive}, there is a $y\in[n+m]$
such that $f\left(\varphi_{1}^{-1}[B]\right)=g(B)=y$ for every $B\in\binom{(X_{12}\cup X_{1})\setminus\{y\}}{k}$ (where
$(X_{12}\cup X_{1})\setminus\{y\}$ may coincide with $X_{12}\cup X_{1}$).

If $y\in X_{\emptyset}\cup X_{2}$, then for every $A\in\binom{[n]}{k}$, we have $\varphi_{1}[A]\in\binom{(X_{12}\cup X_{1})\setminus\{y\}}{k}=\binom{X_{12}\cup X_{1}}{k}$,
and $f(A)=f\left(\varphi_{1}^{-1}\left[\varphi_{1}[A]\right]\right)=y$, so \ref{enu:X0} holds in \prettyref{claim:y}.
If $y\in X_{12}\cup X_{1}$, then for every $A\in\binom{[n]\setminus\{\varphi_{1}^{-1}(y)\}}{k}$, we have $\varphi_{1}[A]\in\binom{(X_{12}\cup X_{1})\setminus\{y\}}{k}$,
and $f(A)=f\left(\varphi_{1}^{-1}\left[\varphi_{1}[A]\right]\right)=y$. Since $f(A)\in F(A)\subseteq X_{1}\cup X_{2}\cup X_{\emptyset}$,
we also have $y\in X_{1}$, so \ref{enu:X1} holds in \prettyref{claim:y}.
\end{boldproof}

\subsection{\label{subsec:cox stolee}A probabilistic construction showing $R_{w}(\protect\Q_{m},\protect\Q_{n})\ge m+n+1$ when $m\ge3$}
\begin{thm}
\label{prop:CS-generalized}If $n,m\in\naturals$, $n$~is sufficiently large, and $m\ge3$, then there exist $\B,\R\subset\Q_{n+m}$
such that $\B\sqcup\R=\Q_{n+m}$, $\Q_{m}$~is not a weak subposet of~$\B$, and $\Q_{n}$~is not a weak subposet of~$\R$.
\end{thm}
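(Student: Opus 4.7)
My plan is to use the probabilistic method, generalizing Cox and Stolee's construction. I would define a random coloring of $\Q_{n+m}$ in which each set $S$ is independently colored red with probability $r_{|S|}$ and blue with probability $1-r_{|S|}$, for parameters $r_k\in[0,1]$ that depend only on the size $k=|S|$. Following Cox--Stolee, I would fix $r_k=1$ (deterministically red) for $k$ in a lower ``red zone'', $r_k=0$ (deterministically blue) for $k$ in an upper ``blue zone'', and $r_k\in(0,1)$ on a ``transition zone'' of a few levels straddling $n$; the width of the transition zone would depend on $m$, and the restriction $m\ge3$ would be what guarantees enough width on the blue side.

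I would then bound the expected number of monochromatic weak copies via linearity of expectation. Every weak copy of $\Q_n$ in $\Q_{n+m}$ must use $n+1$ distinct levels (the chain $\emptyset\subsetneq\{1\}\subsetneq\cdots\subsetneq[n]$ of length $n+1$ in $\Q_n$ maps to a strictly increasing chain in $\Q_{n+m}$), and likewise a weak copy of $\Q_m$ must use $m+1$ levels. I would group weak copies by their ``level profile'' --- the subset of $\{0,1,\ldots,n+m\}$ of size $n+1$ (resp.\ $m+1$) that is hit by the image --- and enumerate copies by profile. For each profile, the image has a generalized sunflower structure: the singletons map to sets sharing a common ``core'' together with distinct ``petals'', and additional common elements appear at higher ranks whenever the profile skips levels. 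I would count the distinct image sets for each profile (typically a polynomial factor in $n,m$ times $\binom{n+m}{n}$) and compute the probability that all image sets take the target colour, obtaining a product of transitional probabilities raised to binomial powers $\binom{n}{r}$ (resp.\ $\binom{m}{r}$).

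The dominant contributions come from the level profiles that touch the transition zone only at its topmost level (for red $\Q_n$) or its bottommost level (for blue $\Q_m$); profiles that reach further into the transition zone contribute super-exponentially smaller terms, because the corresponding probabilities are raised to binomial powers that grow rapidly. By optimising the transitional probabilities $r_n,r_{n+1},\ldots$ I would show that the sum of dominant terms together with a tail of negligible terms is strictly less than $1$ for $n$ sufficiently large, so that the probabilistic method yields a valid deterministic coloring.

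The main obstacle is the careful enumeration of the non-consecutive level profiles and the combinatorial description of their associated sunflower-like structures, since each gap in the profile introduces an extra ``common element'' that must be accounted for in both the count of image sets and the probability of monochromaticity. The hypothesis $m\ge3$ enters precisely in the optimisation step: with only $m=2$ available blue layers the transition zone is too narrow, the dominant red and blue contributions cannot be balanced against one another, and one must instead fall back on the explicit construction of the previous subsection.
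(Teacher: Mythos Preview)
Your plan diverges substantially from the paper's argument, and as written it has a real gap.

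The paper does not run a first-moment count over weak copies. It randomizes only a single level: it sets $\B=\binom{[n+m]}{0}\cup\cdots\cup\binom{[n+m]}{m-2}\cup\F\cup\binom{[n+m]}{m+1}$, where each $m$-set is placed in $\F$ independently with a carefully chosen probability. Note this is \emph{not} a monotone ``red below, blue above'' pattern: level $m-1$ is entirely red while levels $0,\dots,m-2$ and $m+1$ are entirely blue. The asymmetric Lov\'asz Local Lemma then shows that with positive probability every $(m-1)$-set has at least two supersets in $\F$ and every $(m+1)$-set has at most $m-1$ subsets in $\F$. After that there is no enumeration of copies at all: because this interleaved layout gives $\B$ and $\R$ heights exactly $m+1$ and $n+1$, the level of every element in any hypothetical weak copy is forced, and the two local properties of $\F$ directly contradict a blue $\Q_m$ (its $m$ coatoms would have to be $m$ subsets of one $(m+1)$-set lying in $\F$) or a red $\Q_n$ (its $n$ atoms would have to be $n$ supersets of one $(m-1)$-set avoiding $\F$). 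The hypothesis $m\ge 3$ enters because for $m=2$ these two local conditions on $\F$ are mutually inconsistent.

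Your outline has two difficulties. First, once the transition zone has more than one level, a weak copy of $\Q_n$ need not have a single ``level profile'' of size $n+1$: different rank-$k$ elements of $\Q_n$ may land on different levels of $\Q_{n+m}$, so grouping copies by a size-$(n+1)$ subset of $\{0,\dots,n+m\}$ is not well-defined, and the sunflower description you sketch does not apply. Second, with a monotone coloring and a one-level transition (the only case in which the levels \emph{are} forced on both sides), the union bound cannot close: a red weak $\Q_n$ is then determined by $\varphi([n])\in\binom{[n+m]}{n}$ and is monochromatic with probability $r_n$, while a blue weak $\Q_m$ is determined by $\varphi(\emptyset)\in\binom{[n+m]}{n}$ with probability $1-r_n$, so the expected total number of bad copies is $\binom{n+m}{n}\bigl(r_n+(1-r_n)\bigr)=\binom{n+m}{n}$, never below $1$. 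The paper's interleaved layout is exactly what lets both colour classes have the minimal heights simultaneously while sharing a single random level, and the Local Lemma replaces the global first-moment balance you are attempting.
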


In most of this subsection, we prove \prettyref{prop:CS-generalized}. The core of the random construction will be in \prettyref{lem:random}.
In the proof of \prettyref{lem:random} we will use the asymmetric version of the Lov\'asz Local Lemma.
\begin{lem}[Asymmetric Lov\'asz Local Lemma]
\label{lem:asymlocal}Let $\A$ be a collection of events. For $A\in\A$, let $\Gamma(A)$~be the set of those events in~$\A$,
other than $A$ itself, that are not independent of~$A$. If there is a function $x:\A\to[0,1)$ such that for every $A\in\A$,
we have 
\begin{equation}
P(A)\le x(A)\prod_{B\in\Gamma(A)}(1-x(B)),\label{eq:asymlocal-ineq}
\end{equation}
then there is a non-zero probability that none of the events occur.
\end{lem}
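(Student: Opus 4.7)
The plan is to prove by induction on $|S|$ the stronger conditional statement that for every $A \in \A$ and every finite $S \subseteq \A \setminus \{A\}$,
\[
P\bigl(A \,\big|\, \bigcap_{B \in S} \overline{B}\bigr) \le x(A).
\]
Once this is in hand, the lemma follows immediately by enumerating the events as $A_1, \ldots, A_N$ and applying the chain rule:
\[
P\bigl(\bigcap_{i=1}^N \overline{A_i}\bigr) = \prod_{i=1}^N \Bigl(1 - P\bigl(A_i \,\big|\, \bigcap_{j<i} \overline{A_j}\bigr)\Bigr) \ge \prod_{i=1}^N (1 - x(A_i)) > 0,
\]
since each $x(A_i) < 1$ by hypothesis.

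For the inductive claim, the base case $S = \emptyset$ is immediate from \eqref{eq:asymlocal-ineq} because $\prod_{B \in \Gamma(A)}(1-x(B)) \le 1$. For the inductive step, I would partition $S = S_1 \sqcup S_2$ with $S_1 = S \cap \Gamma(A)$ and $S_2 = S \setminus \Gamma(A)$, and then write
\[
P\bigl(A \,\big|\, \bigcap_{B \in S}\overline{B}\bigr) = \frac{P\bigl(A \cap \bigcap_{B \in S_1}\overline{B} \,\big|\, \bigcap_{C \in S_2}\overline{C}\bigr)}{P\bigl(\bigcap_{B \in S_1}\overline{B} \,\big|\, \bigcap_{C \in S_2}\overline{C}\bigr)}.
\]
The numerator is bounded above by $P(A \mid \bigcap_{C \in S_2}\overline{C}) = P(A) \le x(A)\prod_{B \in \Gamma(A)}(1-x(B))$, using mutual independence of $A$ from the $S_2$-events (since $S_2 \cap \Gamma(A) = \emptyset$). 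For the denominator, I would enumerate $S_1 = \{B_1, \ldots, B_r\}$, apply a second chain-rule expansion, and invoke the inductive hypothesis on each factor to obtain a lower bound of $\prod_{i=1}^r (1 - x(B_i)) \ge \prod_{B \in \Gamma(A)}(1 - x(B))$, where the last inequality uses $S_1 \subseteq \Gamma(A)$ together with $1 - x(B) \in (0,1]$. Dividing yields the desired bound $x(A)$, closing the induction.

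The main delicacy, and really the only nontrivial point, is ensuring that each recursive application of the inductive hypothesis uses a strictly smaller conditioning set. Each inner factor $1 - P\bigl(B_i \,\big|\, \bigcap_{j<i}\overline{B_j} \cap \bigcap_{C \in S_2}\overline{C}\bigr)$ is conditioned on $|S_2| + (i-1) \le |S| - 1$ events, so the induction is well-founded; the same inductive bound also guarantees that every intervening conditioning event has positive probability, so all the conditional probabilities written above are legitimately defined.
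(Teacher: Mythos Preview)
The paper does not actually supply a proof of this lemma: it is quoted as the classical asymmetric Lov\'asz Local Lemma and then applied in the proof of \prettyref{lem:random}. So there is no ``paper's own proof'' to compare against.

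Your argument is the standard one and is essentially correct. One small point worth flagging: the step ``$P(A \mid \bigcap_{C \in S_2}\overline{C}) = P(A)$'' requires that $A$ be \emph{mutually} independent of the collection $\{C : C \in S_2\}$, not merely pairwise independent of each $C$. The paper's phrasing of $\Gamma(A)$ (``events \ldots\ that are not independent of~$A$'') is, read literally, a pairwise condition, under which that equality can fail. In every actual application---including the one in \prettyref{lem:random}, where dependence is witnessed by a shared $m$-set---$\Gamma$ is understood as a dependency graph in the stronger sense, so your use of mutual independence is legitimate in context; but strictly speaking you are assuming a bit more than the lemma as stated.
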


\begin{claim}
\label{lem:random}If $n,m\in\naturals$, $n$~is sufficiently large, and $3\le m\le n$, then there is a family of sets
$\F\subset\binom{[n+m]}{m}$ such that

\gdef\labelwidthi{\widthof{(ii)}}
\gdef\labeli{\textup{(\roman{enumi})}}
\gdef\refi{(\textup{\roman{enumi})}}
\begin{enumerate}[label=\labeli, ref=\refi, labelwidth=\labelwidthi, itemindent=\labelwidth, align=left]
\item \label{enu:supersets}for each $S\in\binom{[n+m]}{m-1}$, $\F$~contains at least~$2$ supersets of~$S$, and
\item \label{enu:subsets}for each $T\in\binom{[n+m]}{m+1}$, $\F$~contains at most $m-1$ subsets of~$T$.
\end{enumerate}
\end{claim}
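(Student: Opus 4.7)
The plan is to construct $\F$ via the probabilistic method and the asymmetric Lov\'asz Local Lemma (\prettyref{lem:asymlocal}). Fix a constant $C > 1$, set $p = C \log n / n$, and let $\F$ be the random subfamily of $\binom{[n+m]}{m}$ obtained by including each $m$-subset of $[n+m]$ independently with probability~$p$. Define the bad events $A_S$, for $S \in \binom{[n+m]}{m-1}$, as the event that fewer than two $m$-supersets of~$S$ lie in~$\F$; and $B_T$, for $T \in \binom{[n+m]}{m+1}$, as the event that more than $m-1$ $m$-subsets of~$T$ lie in~$\F$. Showing that with positive probability no bad event occurs yields a family satisfying \ref{enu:supersets} and \ref{enu:subsets}.

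The first step is to bound the event probabilities. Each $S$ has exactly $n+1$ potential $m$-supersets, so
\[
P(A_S) \le (1-p)^{n+1} + (n+1)\, p\, (1-p)^n \le (2 + np)\, e^{-np} = O\!\left(n^{-C}\log n\right).
\]
Each $T$ has exactly $m+1$ potential $m$-subsets, so
\[
P(B_T) \le (m+1)\, p^m (1-p) + p^{m+1} \le (m+2)\, p^m = O\!\left((\log n)^m / n^m\right).
\]

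Next, I count dependencies. Two events involving sets $U$ and $V$ fail to be independent exactly when some $m$-set lies between $U$ and $V$ in the containment order; a routine count then gives $d_{A,A} \le m(n+1)$, $d_{A,B} \le \binom{n+1}{2}$, $d_{B,A} \le \binom{m+1}{2}$, and $d_{B,B} \le (m+1)\, n$. I plan to apply \prettyref{lem:asymlocal} with $x(A_S) = 2P(A_S)$ and $x(B_T) = 2P(B_T)$; using $(1-x) \ge e^{-2x}$ for $x \le 1/2$, inequality~\eqref{eq:asymlocal-ineq} reduces to verifying
\[
x(A_S)\, d_{A,A} + x(B_T)\, d_{A,B} = o(1) \quad \text{and} \quad x(A_S)\, d_{B,A} + x(B_T)\, d_{B,B} = o(1).
\]

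The main (and essentially only) obstacle is the cross-term $x(B_T) \cdot d_{A,B}$, which is of order $(\log n)^m / n^{m-2}$; this tends to zero exactly when $m \ge 3$, and this is precisely where the hypothesis on $m$ enters (the approach would fail for $m=2$). The remaining three cross-terms $x(A_S)\, d_{A,A} = O(m \log n \cdot n^{1-C})$, $x(A_S)\, d_{B,A} = O(m^2 \log n \cdot n^{-C})$, and $x(B_T)\, d_{B,B} = O(m (\log n)^m / n^{m-1})$ all tend to zero for large $n$ as long as $C > 1$. Hence for sufficiently large~$n$ the hypotheses of \prettyref{lem:asymlocal} are met, and the desired family~$\F$ exists.
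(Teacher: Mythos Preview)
Your approach matches the paper's: the same random family (each $m$-set included independently with probability~$p$), the same bad events $A_S$ and $B_T$, the same dependency counts, and the asymmetric Lov\'asz Local Lemma. The paper takes $p=\bigl(4(m+1)(n^{2}-1)e\bigr)^{-1/m}$ with explicit weights $x(A_S)=\frac{1}{4(m-1)(n+1)}$ and $x(B_T)=\frac{1}{4(n-1)(n+1)}$, whereas you take $p=C\log n/n$ and $x=2P$; either parametrisation works.

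There is one slip. The claim covers the whole range $3\le m\le n$, so your $o(1)$ estimates must hold \emph{uniformly} in~$m$. The term $x(A_S)\,d_{A,A}=O(m\log n\cdot n^{1-C})$ is of order $n^{2-C}\log n$ when $m$ is comparable to~$n$, and this does not tend to zero for $1<C\le 2$; the same problem hits $x(A_S)\,d_{B,A}=O(m^{2}\log n\cdot n^{-C})$. Taking any fixed $C>2$ repairs both. You also dropped the factor $m+2$ when writing $P(B_T)=O\bigl((\log n)^{m}/n^{m}\bigr)$; restoring it is harmless, since $(m+2)(C\log n/n)^{m}$ is decreasing in~$m\ge3$ for large~$n$ and so is maximised at $m=3$.
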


\begin{proof}
Let $p=\left(4(m+1)\left(n^{2}-1\right)e\right)^{-1/m}$. Let $\F$ be a collection of sets given by taking each set $F\in\binom{[n+m]}{m}$
independently at random with probability~$p$.

{} For any $S\in\binom{[n+m]}{m-1}$, let $A_{S}$ be the event in which $\F$ contains at most~$1$ superset of~$S$, and
for any $T\in\binom{[n+m]}{m+1}$, let $B_{T}$ be the event in which $\F$ contains at least~$m$ subsets of~$T$. We
have
\begin{align*}
P(A_{S}) & =(n+1)(1-p)^{n}p+(1-p)^{n+1}\tandfinal\\
P(B_{T}) & =(m+1)p^{m}(1-p)+p^{m+1}.
\end{align*}

A given event $A_{S}$ is independent of an event of the form $B_{T}$ unless there is a set $F\in\binom{[n+m]}{m}$ such
that $S\subset F\subset T$, i.e., if $S\subset T$. There are $\frac{(n+1)n}{2}$ such events $B_{T}$. $A_{S}$~is not
independent of another event $A_{S'}$ if there is a $F\in\binom{[n+m]}{m}$ such that $S,S'\subset F$, i.e., if the symmetric
difference of $S$ and $S'$ is of size~$2$. There are $(m-1)(n+1)$ such events $A_{S'}$. By symmetry, a given event
$B_{T}$ is independent of all but $\frac{(m+1)m}{2}$ events~$A_{S}$, and is independent of all but $(n-1)(m+1)$ other
events of the form $B_{T'}$.

We want to use \prettyref{lem:asymlocal} to prove that there is a non-zero probability that none of the events $A_{S}$
and $B_{T}$ occur, and thus $\F$ fulfills the conditions of \prettyref{lem:random}. Define a function 
\[
x:\left\{ A_{S}:S\in\binom{[n+m]}{m-1}\right\} \cup\left\{ B_{T}:T\in\binom{[n+m]}{m+1}\right\} \to[0,1)\text{ by}
\]
\[
x(E)=\begin{cases}
y\coloneqq\frac{1}{4(m-1)(n+1)} & \tif E=A_{S}\text{ for some }S\in\binom{[n+m]}{m-1},\\
z\coloneqq\frac{1}{4(n-1)(n+1)} & \tif E=B_{T}\text{ for some }T\in\binom{[n+m]}{m+1}.
\end{cases}
\]
 For an event~$E$ in the domain of~$x$, let $\Gamma(E)$ be the set of other events that are not independent of~$E$.
We will use the bounds 
\begin{equation}
e^{-x}\ge1-x\ge e^{-2x},\label{eq:expbound}
\end{equation}
 which hold when $0\le x\le\frac{1}{2}.$ For any set $T\in\binom{[n+m]}{m+1}$, we have 
\begin{gather*}
x(B_{T})\prod_{E'\in\Gamma(B_{T})}(1-x(E'))=z\overbrace{(1-y)^{(m+1)m/2}}^{E'=A_{S}}\overbrace{(1-z)^{(n-1)(m+1)}}^{E'=B_{T'}}\\
\overset{\eqref{eq:expbound}}{\ge}ze^{-2(y(m+1)m/2+z(n-1)(m+1))}>\frac{1}{4(n-1)(n+1)e}\\
=(m+1)p^{m}>(m+1)p^{m}(1-p)+p^{m+1}=P(B_{T}).
\end{gather*}
 For any set~$S\in\binom{[n+m]}{m-1}$, we have
\begin{gather*}
x(A_{S})\prod_{E'\in\Gamma(A_{S})}(1-x(E'))=y\overbrace{(1-z)^{(n+1)n/2}}^{E'=B_{T}}\overbrace{(1-y)^{(m-1)(n+1)}}^{E'=A_{S'}}\\
\overset{\eqref{eq:expbound}}{\ge}ye^{-2(z(n+1)n/2+y(m-1)(n+1))}>\frac{1}{4(m-1)(n+1)e}\ge\frac{1}{4(n-1)(n+1)e}\text{,\quad and}
\end{gather*}
\begin{equation}
\begin{gathered}P(A_{S})=(n+1)(1-p)^{n}p+(1-p)^{n+1}<\bigl((n+1)p+1\bigr)(1-p)^{n}\\
\overset{\eqref{eq:expbound}}{\le}\bigl((n+1)p+1\bigr)\cdot e^{-pn}\\
<\bigl((n+1)\left(4(m+1)\left(n^{2}-1\right)e\right)^{-1/m}+1\bigr)\cdot e^{-\bigl(4(m+1)e\bigr)^{-1/m}\cdot n^{1-2/m}}.
\end{gathered}
\label{eq:PAS}
\end{equation}
 On the right-hand side of \eqref{eq:PAS}, $\bigl((n+1)\left(4(m+1)\left(n^{2}-1\right)e\right)^{-1/m}+1\bigr)$ is increasing
in $m$ and $e^{-\bigl(4(m+1)e\bigr)^{-1/m}\cdot n^{1-2/m}}$ is decreasing for $m\ge3$. So, by replacing $m$ with $n$
in the first factor, and $m$ with $3$ in the second factor, we have 
\begin{gather*}
P(A_{S})\le\bigl((n+1)\left(4(n+1)\left(n^{2}-1\right)e\right)^{-1/n}+1\bigr)\cdot e^{-\bigl(16e\bigr)^{-1/3}\cdot n^{1/3}}\\
\le\frac{1}{4(n-1)(n+1)e}<x(A_{S})\prod_{E'\in\Gamma(A_{S})}(1-x(E'))
\end{gather*}

when $n$~is sufficiently large. Therefore the function $x$ satisfies the inequality \eqref{eq:asymlocal-ineq} required
by the asymmetric Lov\'asz Local Lemma, so $\F$ has the desired properties.
\end{proof}
Now we are ready to prove \prettyref{prop:CS-generalized} using the family of sets constructed in \prettyref{lem:random}.
We may assume without loss of generality that $m\le n$. Let $\F\subset\binom{[n+m]}{m}$ be the family of sets given by
\prettyref{lem:random}. Let $\B=\binom{[n+m]}{0}\cup\binom{[n+m]}{1}\cup\ldots\cup\binom{[n+m]}{m-2}\cup\F\cup\binom{[n+m]}{m+1}$,
and let $\R=\Q_{n+m}\setminus\B$.

Assume that $\B$ contains a weak copy of~$\Q_{m}$ provided by the injection $\varphi:\Q_{m}\to\B$. Note that $\B$ has
height $m+1$ as a poset. Therefore, for $A\in\Q_{m}$, $\left|\varphi(A)\right|=m$ if $\left|A\right|=m-1$, and $\left|\varphi(A)\right|=m+1$
if $A=[m]$. The $m$ sets of size $m-1$ in~$\Q_{m}$ are mapped to subsets of $\varphi([m])$ in $\F=\binom{[n+m]}{m}\cap\B$.
But, by \ref{enu:subsets} in \prettyref{lem:random}, only at most $m-1$ subsets of $\varphi([m])$ are in $\F$, a contradiction.

Similarly, assume that $\R$ contains a weak copy of~$\Q_{n}$ provided by the injection $\varphi:\Q_{n}\to\R$. Note that
$\R$ has height $n+1$. Therefore, for $A\in\Q_{n}$, $\left|\varphi(A)\right|=m-1$ if $A=\emptyset$, $\left|\varphi(A)\right|=m$
if $\left|A\right|=1$, and $\left|\varphi(A)\right|=\left|A\right|+m$ if $\left|A\right|\in\{2,3,\ldots,n\}$. The $n$
singletons of~$\Q_{n}$ are mapped to supersets of $\varphi(\emptyset)$ in $\binom{[n+m]}{m}\cap\R=\binom{[n+m]}{m}\setminus\F$.
But, by \ref{enu:supersets} in \prettyref{lem:random}, at least~$2$ supersets of $\varphi(\text{\ensuremath{\emptyset}})$
are in~$\F$, so at most~$n-1$ are in $\binom{[n+m]}{m}\setminus\F$, a contradiction.
\begin{remark*}
The above proof of \prettyref{prop:CS-generalized} cannot be easily made to work for $m=2$. More precisely, the following
\MakeLowercase{\crtcrefnamebylabel{claim:cox-stolee-fails}} holds.
\end{remark*}
\begin{claim}
\label{claim:cox-stolee-fails}The conclusion of \prettyref{lem:random} does not hold for $m=2$, and in fact there is no
$\F\subset\binom{[n+2]}{2}$ such that, for $\B=\{\emptyset\}\cup\F\cup\binom{[n+2]}{3}$ and $\R=\Q_{n+2}\setminus\B$,
$\Q_{2}$~is not a subposet of~$\B$, and $\Q_{n}$~is not a subposet of~$\R$.
\end{claim}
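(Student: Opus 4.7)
The plan is to prove the stronger second assertion: for every $\F\subseteq\binom{[n+2]}{2}$, either $\B$ contains a weak copy of $\Q_2$, or $\R$ contains a weak copy of $\Q_n$. The first assertion then follows automatically, since if the conclusion of \prettyref{lem:random} held for $m=2$, the argument in the proof of \prettyref{prop:CS-generalized} would yield such an $\F$.

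First I would characterize when $\B$ contains a weak copy of $\Q_2$. Since every element of $\B$ has size $0$, $2$, or $3$, a short case analysis on the sizes of four distinct images $b_\emptyset\subseteq b_1,b_2\subseteq b_{12}$ rules out every configuration except one: $b_\emptyset=\emptyset$, $b_{12}\in\binom{[n+2]}{3}$, and $b_1,b_2$ are two distinct $2$-subsets of $b_{12}$ lying in $\F$. Indeed, if $b_\emptyset\in\F$ then $b_{12}$ would have size $3$ and $b_1,b_2$ would have to lie strictly between levels $2$ and $3$ in $\B$, which is empty; if $b_\emptyset\in\binom{[n+2]}{3}$ then no proper superset of $b_\emptyset$ lies in $\B$; and if $b_{12}\in\F$ then $b_1,b_2$ would have to be distinct proper subsets of $b_{12}$ in $\B$, of which only $\emptyset$ exists. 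Viewing $\F$ as a graph on $[n+2]$, two distinct $2$-subsets of a common $3$-set are precisely two edges of $\F$ sharing a vertex, so $\B$ contains no weak copy of $\Q_2$ if and only if $\F$ is a matching.

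Next, assuming $\F$ is a matching, I would produce an explicit weak embedding $\varphi:\Q_n\to\R$. Pick any $x\in[n+2]$; since $\F$ is a matching, at most one edge of $\F$ is incident to $x$, so let $y\in[n+2]\setminus\{x\}$ be its other endpoint if such an edge exists and pick $y$ arbitrarily otherwise. Choose any bijection $g:[n]\to[n+2]\setminus\{x,y\}$ and define
\[
\varphi(\emptyset)=\{x\},\qquad \varphi(\{i\})=\{g(i),x\}\text{ for }i\in[n],\qquad \varphi(A)=g(A)\cup\{x,y\}\text{ for }|A|\ge 2.
\]
Order-preservation is immediate from the formulas. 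Injectivity holds because the three cases produce images of sizes $1$, $2$, and $\ge 4$ respectively, and within each case $g$ is injective. Each image lies in $\R$: $\varphi(\emptyset)$ is a singleton; $\varphi(\{i\})=\{x,g(i)\}$ satisfies $g(i)\ne y$, and by the choice of $y$ no $2$-set of the form $\{x,w\}$ with $w\ne y$ lies in $\F$; and $\varphi(A)$ for $|A|\ge 2$ has size at least~$4$, so it cannot lie in $\B$.

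The only real content is the size case-analysis that isolates the matching condition; once $\F$ is constrained to be a matching, the embedding is essentially forced and its verification is routine. The conceptual point is that a single matching's worth of $2$-sets is far too few to obstruct the many copies of $\Q_n$ available in $\R$, which is exactly why the random argument of \prettyref{lem:random} breaks down at $m=2$.
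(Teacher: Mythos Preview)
Your proof is correct and follows essentially the same reasoning as the paper's, though you are considerably more explicit. The paper argues that conditions \ref{enu:supersets} and \ref{enu:subsets} of \prettyref{lem:random} are incompatible for $m=2$ and then simply asserts that both conditions are also \emph{necessary} for the coloring $\B=\{\emptyset\}\cup\F\cup\binom{[n+2]}{3}$ to avoid a weak $\Q_2$ in $\B$ and a weak $\Q_n$ in $\R$, leaving the verification to the reader. Your argument unpacks exactly this: condition \ref{enu:subsets} for $m=2$ is equivalent to $\F$ being a matching, which you show is equivalent to $\B$ avoiding a weak $\Q_2$; and the failure of condition \ref{enu:supersets} (some vertex of degree at most one, which is automatic in a matching) is what allows your explicit embedding of $\Q_n$ into $\R$. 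So the two proofs coincide in substance, with yours supplying the construction the paper omits.
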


\begin{proof}
A family of sets $\F\subset\binom{[n+2]}{2}$ that satisfies the condition \ref{enu:supersets} in \prettyref{lem:random}
contains, for any $S\in\binom{[n+2]}{1}$, a pair of sets $A,B$ such that $S\subset A,B$. Note that $A$ and $B$ have
a symmetric difference of size~$2$. Then $A,B\subset A\cup B\in\binom{[n+2]}{3}$, which contradicts the condition \ref{enu:subsets}
in \prettyref{lem:random}. So the two conditions of \prettyref{lem:random} cannot be satisfied at the same time by a family
of sets $\F\subset\binom{[n+2]}{2}$.

It is easy to check that the conditions \ref{enu:supersets}~and~\ref{enu:subsets} on~$\F$ in \prettyref{lem:random}
are not only sufficient, but also necessary for the above coloring to satisfy the conditions of \prettyref{prop:CS-generalized},
that is, to have no $\Q_{2}$ as a subposet of $\B=\{\emptyset\}\cup\F\cup\binom{[n+2]}{3}$, and no $\Q_{n}$ as a subposet
of $\R=\Q_{n+2}\setminus\B$.
\end{proof}

\section*{Acknowledgments\phantomsection\addcontentsline{toc}{section}{Acknowledgements}}

The second and third authors were supported by the grant IBS-R029-C1. The research of the second author was also partially
supported by the EPSRC, grant no. EP/S00100X/1 (A. Methuku).

\end{document}